\theoremstyle{plain}
\newtheorem{theorem}{Theorem}
\newtheorem{lemma}{Lemma}
\newtheorem{corollary}{Corollary}
\theoremstyle{definition}
\newtheorem{definition}{Definition}
\theoremstyle{plain}
\newtoks\thehProclaim
\newtheorem*{Proclaim}{\the\thehProclaim}
\theoremstyle{definition}
\newtoks{\thehRemark}
\newtheorem*{Remark}{\the\thehRemark}
\renewcommand{\leq}{\leqslant}
\renewcommand{\geq}{\geqslant}
\begin{document}

\title{Minimal triangulations of circle bundles}

\author{ Gaiane Panina, Maksim Turevskii}

\address{ G. Panina: St. Petersburg Department of Steklov Mathematical Institute; St. Petersburg State University;  gaiane-panina@rambler.ru; M.Turevskii:St. Petersburg State University; turmax20052005@gmail.com }

\subjclass[2000]{}

\keywords{}

\begin{abstract}   A triangulation of a circle  bundle $ E \xrightarrow[\text{}]{\pi} B$ is a triangulation of the total space $E$ and the base $B$ such that the projection $\pi$ is a
simplicial map.

In the paper we address the  following questions:
 \textit{Which circle bundles can be triangulated over a given triangulation of the base?  What are the minimal triangulations of a bundle?}

 A complete solution  for  semisimplicial triangulations was given by N. Mn\"{e}v.
Our results deal with classical triangulations, that is, simplicial complexes. We give an exact  answer for an infinite family
of triangulated spheres (including the boundary of the $3$-simplex,
   the boundary of the   octahedron, the suspension over an $n$-gon, the icosahedron).  For the general case  we present a sufficient criterion
   for existence of a triangulation.
   Some minimality results follow straightforwadly.

\end{abstract}

\maketitle

\section{Introduction}\label{SecIntro}

Any smooth map of constant rank between two closed smooth
manifolds can be \textit{triangulated}, that is,   the manifolds can be triangulated  in such  a way that the map becomes simplicial.
Thus the difficult question of finding  minimal triangulations makes  sense not only for individual manifolds, but also for  fiber bundles.

Let $ E \xrightarrow[\text{}]{\pi} B$   be a \textit{circle bundle},  that is,  a principal $S^1$-bundle, or, equivalently, an oriented locally trivial  fiber bundle whose fibers are circles $S^1$. A \textit{ (classical) triangulation}  of the bundle is a representation of $B$ and $E$ as (supports of) finite simplicial complexes such  that $\pi$ is  a \textit{simplicial map}.

\medskip

\medskip
In the paper we address the  following question:

  \textit{Which circle bundles can be classically triangulated over a given triangulation of the base? What is the minimal triangulation (that is, a triangulation with the minimal number of vertices in $E$) of a given bundle?}

\medskip

\medskip
We will make use also of \textit{semi-simplicial triangulations}, known also as $\Delta$-\textit{sets}, see \cite{Hat}.  Semi-simplicial triangulations are much less restrictive than classical ones (for instance, they allow a simplex glue to itself), and therefore, they leave more freedom for triangulating.

\newpage
\subsection*{ What is known}
\begin{itemize}
  \item  A minimal (classical)  triangulation of the Hopf bundle over the boundary of the tetrahedron  is constructed in \cite{MS}. Its total space has $4\cdot 9$  three-dimensional simplices  and $12$ vertices.
      
      \item Only bundles with Euler number  $\mathcal{E}=0$ or $\pm 1$ can be classically triangulated over the boundary  of the tetrahedron  (see \cite{Mnev2} and \cite{Gan}).
  \item The local combinatorial formula (see Section \ref{SecPrel}  and \cite{Mnev3})) implies  that  if a bundle over a triangulated $2$-dimensional
orientable surfaces $B$ can be classically triangulated, then the base $B$ has more that $2|\mathcal{E}|$ triangles, where $\mathcal{E}$
is the Euler number of the bundle (see \cite{Mnev2} and  a short reminder below.).
  \item For semi-simplicial triangulations
N. Mn\"{e}v proved:

\medskip

\begin{theorem}\cite{Mnev1}\label{ThmMnev}
  \textit{ A circle bundle can be semi-simplicially triangulated over a
finite semi-simplicial base  $B$ iff its integer Euler class
can be represented by a simplicial cocycle  having values $0$ and $1 $ on $2$-simplices of the base. For classical simplicial triangulations the
condition is necessary but not sufficient.
}
\end{theorem}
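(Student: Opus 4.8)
The plan is to prove the two implications of the ``iff'' separately and, in each, to work skeleton by skeleton over $B$, exploiting that the restriction of $\pi$ over a single simplex of $B$ is the trivial circle bundle. Throughout, fix an orientation of the fibres and recall that the integer Euler class $e(E)\in H^2(B;\mathbb Z)$ is, by obstruction theory, the class of the obstruction cocycle to extending a section from the $1$-skeleton to the $2$-skeleton of $B$ (and that there are no further obstructions, since $\pi_k(S^1)=0$ for $k\geq 2$).

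\emph{Necessity.} Given a semi-simplicial triangulation $\pi\colon E\to B$, I would first read off a distinguished section over the $1$-skeleton: over each vertex $v$ the fibre $\pi^{-1}(v)$ is a triangulated circle, so pick one of its vertices $s(v)$; over each edge $[v_0v_1]$ the preimage is a triangulated cylinder whose $1$-skeleton is connected, so extend $s$ to an edge-path joining $s(v_0)$ to $s(v_1)$ that projects homeomorphically onto $[v_0v_1]$, choosing among the homotopy classes rel endpoints the one prescribed by the cylinder's triangulation (the one that ``does not spiral''). Over a $2$-simplex $\sigma=[v_0v_1v_2]$ the three chosen edge-paths close up to a loop in $\pi^{-1}(\partial\sigma)$, and its winding number relative to the flat section of $\pi^{-1}(\sigma)\cong S^1\times\Delta^2$ defines $c(\sigma)\in\mathbb Z$. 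That $c$ is a cocycle representing $e(E)$ is then the standard obstruction-theoretic fact; its concrete justification is that $\pi$ is trivial over every $3$-simplex, so the $1$-skeleton section extends there and the alternating sum of the $c$-values around $\partial\Delta^3$ is forced to vanish. The crux --- and the step that genuinely uses that we started from a triangulation --- is to show $c(\sigma)\in\{0,1\}$: a fibre over an interior point of $\sigma$ runs through a cyclic sequence of $3$-simplices of $E$, each with four vertices but only the three vertices of $\sigma$ as image, hence with two vertices in a common fibre-circle; this lack of room forces the loop assembled from the edge-cylinders to wrap at most once around $\pi^{-1}(\sigma)$. I expect making this bound precise --- choosing the connecting paths so that the combinatorics pin the winding to $0$ or $1$ --- to be the main obstacle.

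\emph{Sufficiency.} Conversely, given a $\{0,1\}$-valued simplicial cocycle $c$ on $B$ with $[c]=e(E)$, I would build the triangulation by induction on skeleta. Over a vertex put a small triangulated circle, with just enough vertices to accommodate the twists around that vertex; over an edge put a cylinder $S^1\times\Delta^1$ compatible with the two end circles. Over a $2$-simplex $\sigma$ glue a triangulated solid torus $S^1\times\Delta^2$ extending the already-built boundary: an untwisted block when $c(\sigma)=0$, and a block containing a single elementary shuffle (a combinatorial Dehn twist on $\pi^{-1}(\partial\sigma)$) when $c(\sigma)=1$, arranged so that the winding number computed as in the necessity part equals $c(\sigma)$. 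For $k\geq 3$ one must extend the already-constructed triangulation of $\pi^{-1}(\partial\Delta^k)$ over $\pi^{-1}(\Delta^k)$; for $k=3$ the obstruction is precisely $(\delta c)$ on that tetrahedron, which vanishes since $c$ is a cocycle, while for $k\geq 4$ there is no obstruction, $S^1$ having no higher homotopy, and a direct prism-type fill over the cone point completes the step --- this freedom is exactly what the semi-simplicial setting buys us. The Euler cocycle of the bundle so produced is $c$ by construction, so $E$ is triangulated over $B$ with the prescribed Euler class.

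\emph{The classical case.} Necessity carries over verbatim, a classical triangulation being in particular semi-simplicial. For non-sufficiency it is enough to name one bundle meeting the cocycle condition yet admitting no classical triangulation: take the circle bundle of Euler number $\mathcal E=2$ over $\partial\Delta^3$. Since $H^2(\partial\Delta^3;\mathbb Z)\cong\mathbb Z$ is generated by the cocycle supported on a single triangle, $e(E)=2\cdot(\text{generator})$ is represented by the $\{0,1\}$-cocycle taking value $1$ on two of the four triangles and $0$ on the others; but by \cite{Mnev2}, \cite{Gan} no bundle over $\partial\Delta^3$ with $|\mathcal E|\geq 2$ is classically triangulable. (The classical triangulation of \cite{MS} for $|\mathcal E|=1$, needing $12$ vertices and $36$ tetrahedra, further illustrates how far the classical minimum sits above the semi-simplicial one.)
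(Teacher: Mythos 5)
The paper does not actually prove Theorem \ref{ThmMnev}; it quotes it from \cite{Mnev1}, and the machinery it summarizes (necklaces, morphisms, the local combinatorial formula, the extension lemmas of Section 7) is precisely what a proof would run on. Measured against that, your outline has the right obstruction-theoretic frame but leaves both crucial steps unproven. In the necessity direction, the entire content of the theorem is the claim you defer as ``the main obstacle'': that the beads over the vertices and the connecting paths over the edges can be chosen so that the integer obstruction value over every $2$-simplex lands in $\{0,1\}$. This is not a technicality to be cleaned up later --- it is Mn\"ev's binary Chern cocycle construction, and it requires the combinatorial analysis of the triangulation over a $2$-simplex (in the language of this paper, of the necklace $\mathcal{O}(\sigma^2)$ and the cyclic positions of the chosen beads). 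For an arbitrary section the integer obstruction values are unbounded, and the bound $|\varepsilon|\leq 1/2$ from the local combinatorial formula concerns a different, rational cocycle, so the ``lack of room'' heuristic by itself does not yield values $0$ and $1$.

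In the sufficiency direction you identify the combinatorial extension problem with the topological one without justification. Vanishing of $\delta c$ on a $3$-simplex, and of the higher obstructions for $k\geq 4$ because $\pi_{k-1}(S^1)=0$, gives an extension of the \emph{bundle}, not of the \emph{triangulation}: one must still produce a semi-simplicial triangulation of $\pi^{-1}(\Delta^k)$ with simplicial projection extending the given one over $\pi^{-1}(\partial\Delta^k)$. That every consistent collection of necklaces over the faces of $\Delta^k$, $k\geq 4$, assembles into a necklace over $\Delta^k$, and that over $\Delta^3$ the only obstruction is triviality of the bundle over the boundary, is exactly Prop.~7 of \cite{Mnev1} (compare the two lemmas in Section 7 of this paper), and your ``prism-type fill'' does not substitute for that argument; similarly the existence of the twisted block over a $2$-simplex with prescribed boundary data should be exhibited explicitly (in necklace terms it is easy, e.g. $012012$ against a product-type necklace, but it must be said). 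The classical part is acceptable as written: necessity is immediate, and non-sufficiency via the Euler number $2$ bundle over $\partial\Delta^3$ legitimately rests on the quoted results of \cite{Mnev2} and \cite{Gan}.
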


\end{itemize}

\medskip

\subsection*{ Main results of the paper}

We start with $2$-dimensional
orientable surfaces as the base. In this case oriented circle bundles are classified by their integer Euler classes, or \textit{Euler numbers}, that is, elements of
$H^2(B,\mathbb{Z})=\mathbb{Z}$.

\begin{theorem}\label{ThmMain1}  Let  $B$ be a triangulated two-dimensional closed oriented surface with $f(B)$  triangles  and $v(B)$ vertices.

                                 If   $f(B)\geq 4|\mathcal{E}|,$
 then the circle bundle with  Euler number $\mathcal{E}\in \mathbb{Z}$  can be classically  triangulated over the given triangulation of
$B$ with  $3v(B)$ vertices and $9 f(B)$  three-dimensional simplices in the total space $E$. We present an explicit construction of such a triangulation. 

                                  A smaller number of  $3$-dimensional simplices in  $E$  is not possible for the given triangulation of $B$.

\end{theorem}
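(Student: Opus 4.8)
The plan is to prove the two halves of the statement — the explicit construction of a triangulation of the total space with $3v(B)$ vertices and $9f(B)$ tetrahedra, and the matching lower bound $9f(B)$ valid for \emph{every} circle bundle over the given $B$ — separately. For the construction, put over each vertex of $B$ a copy of the $3$-cycle $C_3$ (the boundary of a triangle, i.e.\ the minimal triangulation of $S^1$); this already accounts for all $3v(B)$ vertices and forces, over each edge of $B$, a triangulated annulus with exactly the $6$ boundary vertices, hence at least $6$ triangles. Over a triangle $\tau$ of $B$ the total space restricts to $S^1\times\Delta^2$, and I would triangulate it by the standard prism triangulation of $C_3\times\Delta^2$: three copies of the $3$-tetrahedron shuffle triangulation of $\Delta^1\times\Delta^2$, glued cyclically, i.e.\ $9$ tetrahedra. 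Carrying this out coherently over all of $B$ with all shuffles ``untwisted'' produces the trivial bundle $S^1\times B$, so $\mathcal E=0$. To hit an arbitrary Euler number (say $\mathcal E\ge0$, the other sign obtained by reversing the fibre orientation) I would install, over certain triangles, a \emph{twisted} variant of the $9$-tetrahedron local model — the shuffle precomposed with a cyclic rotation of one of the three boundary fibres — and read off from the local combinatorial Euler formula (Section~\ref{SecPrel}, \cite{Mnev3}) that an isolated such twist keeps the bundle locally trivial and changes its Euler number by $+1$. The choice of where to twist is where the hypothesis enters: the dual graph of the triangulated surface $B$ is a $3$-regular simple graph on $f(B)$ vertices, so a greedy argument yields an independent set of at least $f(B)/4\ge|\mathcal E|$ triangles, and I would place one twist on each of $|\mathcal E|$ of them. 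Independence guarantees that every twisted triangle has only untwisted neighbours, which is exactly what makes the twisted and untwisted local models agree along the annuli over shared edges; the result is a genuine simplicial complex with $3v(B)$ vertices, $9f(B)$ tetrahedra and Euler number $\sum(+1)=\mathcal E$, and since circle bundles over a surface are classified by the Euler number this is the desired bundle.

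For the lower bound I would argue fibrewise. Each tetrahedron $\sigma$ of $E$ maps simplicially onto a simplex of $B$; it cannot map onto a vertex ($\sigma$ would lie inside a fibre $S^1$) nor onto an edge ($\sigma$ would lie inside $\pi^{-1}(\text{edge})\cong S^1\times[0,1]$, a surface), so $\pi(\sigma)$ is a triangle $\tau$ and the four vertices of $\sigma$ split $2+1+1$ over the corners of $\tau$; call the corner $v_i$ carrying two of them the apex-corner of $\sigma$, and note that the edge of $\sigma$ joining those two vertices is an edge of the fibre circle $C_i=\pi^{-1}(v_i)$. Thus for fixed $\tau$, sending each tetrahedron over $\tau$ to this (apex-corner, apex-edge) gives a map into $\bigsqcup_{i=0}^{2}\mathrm{Edges}(C_i)$, and I would check that it is surjective: $\pi^{-1}(\tau)\cong S^1\times\Delta^2$ is a compact $3$-manifold with boundary, so its triangulation is a pure $3$-dimensional complex, whence every edge of every $C_i\subset\pi^{-1}(\tau)$ is a face of some tetrahedron of $\pi^{-1}(\tau)$, and by the dimension count that tetrahedron maps onto $\tau$ with precisely that apex-corner and apex-edge. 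Since each $C_i$ has $n_i\ge3$ vertices, hence $n_i$ edges, there are at least $n_0+n_1+n_2\ge9$ tetrahedra over $\tau$; summing the disjoint contributions over all $f(B)$ triangles gives at least $9f(B)$ tetrahedra in $E$, with equality only when every fibre over a vertex is a $3$-cycle — precisely the regime of the construction.

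The only genuine difficulty lies in the construction: exhibiting the twisted $9$-tetrahedron model of $S^1\times\Delta^2$ explicitly, verifying that it is a bona fide classical simplicial complex which glues correctly to untwisted neighbours across the shared annuli, and confirming through the local combinatorial formula that an isolated twist contributes exactly $\pm1$ to the Euler number; this verification, together with the independent-set estimate, is where the factor $4$ in the hypothesis $f(B)\ge4|\mathcal E|$ is really used. The lower bound, by contrast, is essentially self-contained once one observes that $\pi^{-1}(\tau)$ is a pure $3$-dimensional complex.
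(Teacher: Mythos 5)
Your lower-bound argument is fine: the observation that every tetrahedron over a triangle $\tau$ has a unique ``apex'' fibre edge over one corner, together with purity of the solid torus $\pi^{-1}(\tau)$ and the fact that each vertex fibre has at least $3$ edges, gives at least $9$ tetrahedra over each triangle; this is essentially a direct proof of condition (1) of Lemma \ref{LemmaMixed}, which is how the paper justifies minimality.

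The construction half, however, has a fatal gap: the claim that an isolated twisted $9$-tetrahedron model over a single triangle ``changes the Euler number by $+1$'' is false, and the local combinatorial formula you invoke is exactly what rules it out. For a classical simplicial triangulation the LCF value of any triangle satisfies $|\varepsilon(\sigma^2)|<1/2$, and $\varepsilon(\sigma^2)$ depends only on the necklace over that triangle. If your twist leaves the local models (hence the necklaces, hence the LCF contributions) of all neighbouring triangles unchanged, then replacing the untwisted model by the twisted one changes the total $\sum_{\sigma^2}\varepsilon(\sigma^2)$ by strictly less than $1$ in absolute value; since both totals are integer Euler numbers, the change is exactly $0$. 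So no placement of twists on an independent set of triangles can produce $\mathcal{E}\neq 0$: the Euler class simply cannot be concentrated at $+1$ per triangle, it must be spread out, which is the real reason the hypothesis $f(B)\geq 4|\mathcal{E}|$ appears. (A symptom of the same miscount: the untwisted prism model with necklace $021021021$ contributes $\pm 1/6$, not $0$, so even your baseline bookkeeping ``untwisted $=0$, twisted $=+1$'' is not consistent with the LCF.) The paper's construction avoids this by making each triangle contribute $\pm 1/4$ via small symmetric necklaces governed by an orientation $a$ of the edges (so the per-triangle values are the coupled quantities $\mathcal{F}(da)(\sigma^2)$, changed coherently across edges rather than triangle by triangle), solving a linear system over $\mathbb{Z}_2$ to realize any prescribed distribution summing to $\mathcal{E}$, and only then doubling one bead over each vertex to pass from the semisimplicial $6$-bead necklaces to classical $9$-bead ones, which yields the stated $3v(B)$ vertices and $9f(B)$ tetrahedra; your independent-set/greedy step plays no role in that mechanism.
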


For very many simplicial $2$-spheres, the condition  $f(B)\geq 4 |\mathcal{E}|$ in Theorem \ref{ThmMain1} is not only sufficient, but also necessary.
For a precise statement we need the following:


\textbf{Coloring game}

The input of the game is an (uncolored) simplicial  $2$-sphere $B$.
\begin{enumerate}
  \item Pick  either one or two vertices of $B$, and color them  red.
  \item  Pick an edge with two currently not colored vertices that forms a face with an already colored vertex. Color the  edge and its  vertices red,
and color the  face green, see Fig. \ref{FigColor}.

Repeat (2) until it is possible.
\end{enumerate}

We say that the game has \textit{a winning strategy} if eventually  the number of green faces is greater or equal than $f(B)/4$. 

\begin{figure}[h]\label{FigColor}
\begin{center}
\includegraphics[width=10cm]{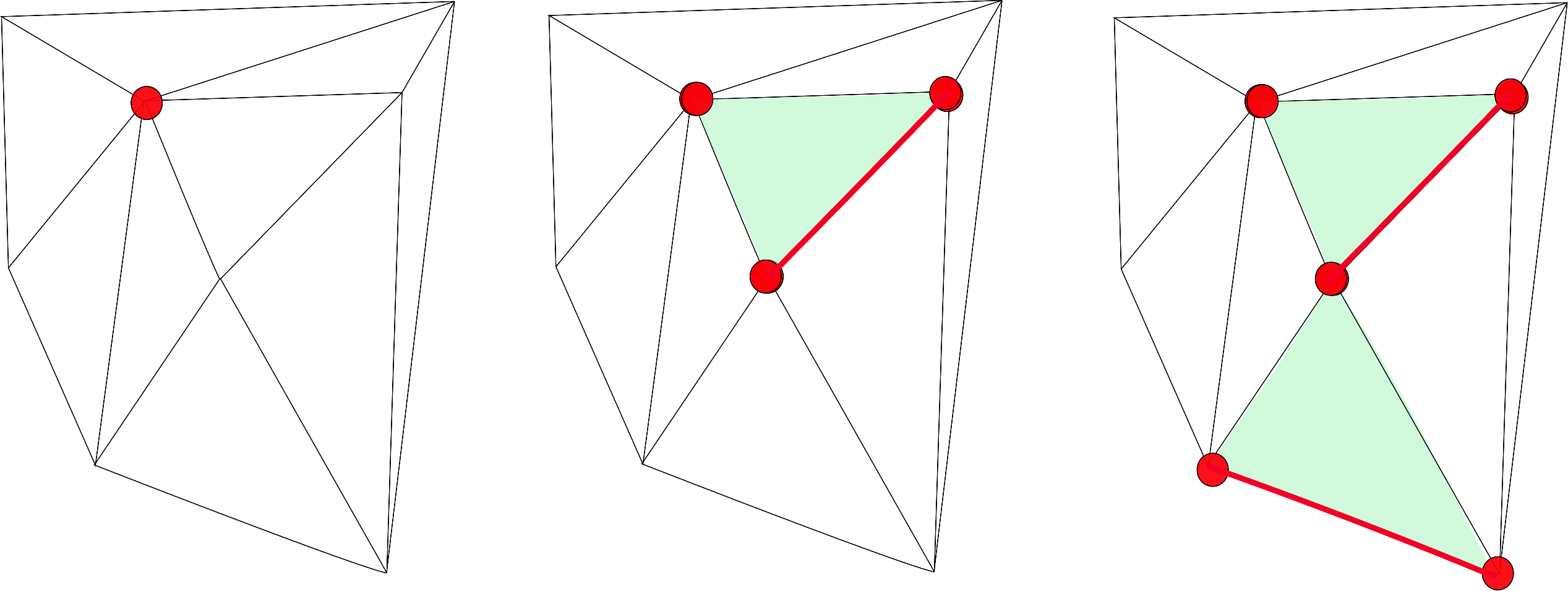}
\caption{Two steps of a coloring game}
\end{center}
\end{figure}

\begin{theorem}\label{ThmMain2}

  Assume a triangulated  sphere $B$  has a winning strategy for the coloring game. Then
  the circle bundle with  Euler number $\mathcal{E}\in \mathbb{Z}$  can be classicaly  triangulated over $B$   iff  $$f(B)\geq 4|\mathcal{E}|.$$

\end{theorem}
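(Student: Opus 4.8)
The plan is to treat the two implications separately; the coloring game is needed only for one of them. The implication $f(B)\ge 4|\mathcal E|\Rightarrow$ (classical triangulability over $B$) requires nothing new: a triangulated $2$-sphere is in particular a triangulated closed oriented surface, so this follows from Theorem \ref{ThmMain1}, and the hypothesis on the game is irrelevant here. The real point is the converse: given a classical triangulation of the circle bundle with Euler number $\mathcal E$ over $B$, one must show $|\mathcal E|\le f(B)/4$.

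For the converse I would proceed in three steps. First, apply the local combinatorial formula recalled in Section \ref{SecPrel}: once orientations and a base vertex in each fibre are chosen, the triangulation yields a simplicial cocycle $c$ representing the Euler class with $|c(\sigma)|\le 1$ on every triangle $\sigma$ of $B$, so that $|\mathcal E|\le|\operatorname{supp}c|$; here the classical (as opposed to semi-simplicial) hypothesis is what forces every fibre to have at least three vertices and the preimage of each edge to be an honest simplicial cylinder, whence a suitably chosen simplicial section can accumulate at most one unit of winding across a single triangle. Second, fix a winning play of the game: an initial set $R$ of one or two red vertices together with green triangles $g_1,\dots,g_k$, $k\ge f(B)/4$, produced by legal moves. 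Counting the vertices coloured gives $|R|+2k\le v(B)$, and since $f(B)=2v(B)-4$ for a triangulated sphere this forces $k\le f(B)/4$ (the relevant case being $|R|=2$), hence $k=f(B)/4$ and the play colours \emph{every} vertex of $B$, in an order $v_1<\dots<v_{v(B)}$ listing the vertices of $R$ first and then the fresh pair introduced at each move. Using this order I would build, by induction along the play, a simplicial partial section of $\pi$ over $B^{(1)}\cup\bigl(B^{(2)}\setminus\{g_1,\dots,g_k\}\bigr)$: having already sectioned over the subcomplex spanned by the first $2i$ vertices, the $i$-th move introduces the adjacent pair spanning the edge of $g_i$, and one uses the at-least-three base points available in each of the two new fibres, together with the loop shifts on the newly added edge paths, to make every triangle closed off at this step bound a disc in its solid torus — every such triangle \emph{except} $g_i$, which is permitted to keep nonzero winding. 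Since the play exhausts all vertices, every triangle other than $g_1,\dots,g_k$ eventually gets sectioned, so $c$ may be taken supported inside $\{g_1,\dots,g_k\}$. Third, by the first step $|c(g_i)|\le 1$ for each $i$, whence $|\mathcal E|=\bigl|\sum_i c(g_i)\bigr|\le k=f(B)/4$, as required.

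The hard part is the inductive construction of the partial section in the second step: one must verify that at every move the freedom coming from the $\ge 3$ base points in each of the two new fibres and from the shifts of the newly added edge paths always suffices to kill the winding of all triangles newly closed off at that move except $g_i$. This is exactly where the combinatorics peculiar to classical simplicial triangulations enters — through the local formula of Section \ref{SecPrel} — and also where the winnability of the game is essential: if the game could not be completed, some triangles of $B$ would never be reached and the part of the Euler obstruction supported there would remain uncontrolled, giving only the weaker bound $f(B)>2|\mathcal E|$ of the Introduction. One should also handle the minor parity bookkeeping in the case $|R|=1$, in which the first green triangle likewise carries no winding and the support of $c$ is confined to $\{g_2,\dots,g_k\}$.
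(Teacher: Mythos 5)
Your reduction of the forward implication to Theorem \ref{ThmMain1} and your overall plan (use the winning play to bound $|\mathcal{E}|$ by the number $k$ of green faces) are in the right spirit, but the core of your converse argument has a genuine gap, and as set up it is essentially circular. On a triangulated $2$-sphere every integer $2$-cochain is a cocycle, and two integer cocycles represent the same class iff they have the same sum; hence ``there exists an obstruction cochain supported on the green faces with values in $\{0,\pm1\}$'' is \emph{literally equivalent} to the inequality $|\mathcal{E}|\le k$ you are trying to prove. Your inductive construction of a partial section is supposed to produce such a cochain, but the only freedoms you invoke there are the choice of fibre points over the two new vertices and integer ``loop shifts'' on the new edge paths; shifting windings on edges just replaces the obstruction cocycle by a cohomologous one, so by itself this machinery can realize any representative and proves nothing — the whole content would have to come from a concrete combinatorial verification, using the classical-triangulation structure (Lemma \ref{LemmaMixed}: at least three beads of each colour, mixed colours), that at every move one can simultaneously kill all newly closed non-green triangles while every green triangle ends up with $|c(g_i)|\le 1$. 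You explicitly flag this as ``the hard part'' and do not prove it; note in particular that your appeal to ``the first step'' for $|c(g_i)|\le 1$ fails, because after the winding shifts needed to kill the non-green faces the section over $\partial g_i$ is no longer the ``suitably chosen'' simplicial one, and the claim that the green faces absorb everything yet stay bounded by $1$ is again exactly $|\mathcal{E}|\le k$. A further inaccuracy: the local combinatorial formula of Section \ref{SecPrel} produces a \emph{rational} cocycle $\varepsilon$ with $|\varepsilon(\sigma)|<1/2$ on classical triangulations, not an integer cocycle with values bounded by $1$, so Step 1 as stated conflates two different objects. (Your parity bookkeeping is also off in the $|R|=1$ case, where the count only gives $k\le (f+2)/4$, but that is minor.)

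For comparison, the paper avoids sections altogether: it keeps the given triangulation of the bundle and applies the bead-doubling operation (which preserves the bundle) to create a ``huge'' bead over each vertex, choosing these huge beads \emph{in the order dictated by the winning play}. At each move the two freshly coloured vertices of the green face give enough freedom to orient the huge multicoloured triple negatively, so the LCF value on every green face is forced to be close to $-1/2$, while on all other faces it is at most $1/2$ in absolute value; summing gives $\mathcal{E}\le f/2-k\le \lfloor f/4\rfloor$. This is where the classical hypothesis and the game are actually used, via a short, checkable local argument — precisely the kind of verification your inductive step would need but does not contain. To repair your write-up you should either supply that combinatorial verification or switch to the doubling/LCF mechanism.
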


\medskip

	\begin{lemma}\label{LemWin}
		All triangulated spheres with at most $12$ triangles have a winning strategy.
	\end{lemma}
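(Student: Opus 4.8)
The plan is to turn the statement into a finite verification after extracting the one structural fact that does the real work.

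\textbf{Reduction.} Euler's formula gives $e=\tfrac{3f}{2}$ and $v=2+\tfrac f2$ for a triangulated $2$-sphere with $f$ triangles, so $f$ is even; combined with $f\ge 4$ this leaves only $f\in\{4,6,8,10,12\}$, each with $v\le 8$. Every green move colors two previously uncolored vertices, and the opening step colors one or two of them, so after $g$ green moves at least $2g+1$ vertices are red; hence the game produces at most $\lfloor (v-1)/2\rfloor$ green faces. A one-line computation shows $\lceil f/4\rceil=\lfloor (v-1)/2\rfloor$ for every even $f$, so having a winning strategy is the same as being able to play optimally, i.e. to attain $g=\lfloor (v-1)/2\rfloor$. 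Since there are only finitely many triangulated $2$-spheres with at most $8$ vertices — in fact $1+1+2+5+14=23$ combinatorial types — it suffices to treat each of them.

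\textbf{Structural lemma.} The point I would isolate is: if the game reaches a position admitting no legal move and the set $U$ of still-uncolored vertices is non-empty, then $U$ spans no edge of $B$. Indeed, suppose $u,v\in U$ are adjacent. Having no legal move means that the third vertex of each of the two triangles on the edge $uv$ is uncolored; applying the same remark to the edges of those triangles and iterating, the family $\mathcal F$ of triangles of $B$ with all three vertices in $U$ is non-empty and has the property that every edge occurring in a triangle of $\mathcal F$ lies in exactly two triangles of $\mathcal F$. Since any two triangles of $S^2$ are joined by a chain of pairwise edge-adjacent triangles, $\mathcal F$ must then be the set of all triangles of $B$, forcing $U$ to be the whole vertex set — impossible, as at least one vertex was colored red in the opening step. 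Consequently the only way to lose is to run out of legal moves while two still-uncolored vertices are non-adjacent and the optimum has not yet been reached.

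\textbf{The winning play and the crux.} With the structural lemma in hand, the dead-end positions are exactly those whose uncolored set is independent, so the task on each sphere is to steer around them. For odd $v$ ($f=6,10$) I would open by coloring one vertex of minimum degree $d\le 5$, whose link is a $d$-cycle and hence offers several safe first moves, and then play greedily; for even $v$ ($f=4,8,12$) I would open with two adjacent vertices, which loses nothing because the bound there equals $v/2-1$, and again play greedily, preferring at each step a move after which the uncolored set still spans an edge. The remaining bookkeeping — matching up the first move, the greedy continuation, and the endgame on each of the $23$ spheres — is routine, the triangular bipyramid and the five $7$-vertex spheres requiring marginally more care since there an optimal play must exhaust all vertices. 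The one place that really needs attention, and the place where an attempt to push the lemma to spheres with more than $12$ triangles breaks down, is precisely this endgame: guaranteeing that one never gets stranded with two mutually non-adjacent uncolored vertices before the optimum is reached.
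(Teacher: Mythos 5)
Your setup is sound: the winning threshold $\lceil f/4\rceil$ coincides with the maximal possible number $\lfloor (v-1)/2\rfloor$ of green moves, so a winning strategy must be an optimal play, and only the finitely many spheres with $v\le 8$ need to be treated. Your structural lemma is also correct: at a dead end the uncolored vertex set is independent, by the dual-connectedness argument you give. But these two facts only describe \emph{when} the game is stuck; they do not show that on each of the $23$ spheres some play avoids getting stuck before the optimum is reached, and that is the entire content of the lemma. Your final paragraph replaces this step by ``play greedily'' and ``routine bookkeeping'', while simultaneously conceding that exactly this endgame is ``the one place that really needs attention''; no argument is given that your greedy rule (prefer moves after which the uncolored set still spans an edge) can never strand you, and the finite verification over the $23$ combinatorial types is not actually carried out. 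So the crux is asserted rather than proved, and the proposal is incomplete. (The paper's non-example after Corollary \ref{CorMin} shows that independence of the residual set is a real obstruction, not a formality, so some genuine argument is required here.)

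For comparison, the paper closes this gap with a short explicit uniform strategy instead of a case check: it takes a vertex $V$ of degree at least $4$ (for $v=5,6$) or at least $5$ (for $v=7,8$), colors it red (together with the one leftover vertex when $v$ is even), and then colors two, respectively three, pairwise disjoint edges in the link of $V$; each such edge forms a face with $V$, so it is a legal move. In the degree-$5$ case the third green move is supplied by an edge $V'V''$ from a neighbour $V'$ of $V$ to a vertex $V''$ not adjacent to $V$, the first two moves being chosen to avoid $V'$. If you wish to keep your framework, you must either actually perform the verification on all $23$ spheres or prove an explicit-strategy statement of this kind; your independence lemma is a nice additional observation (it cleanly certifies endgame positions), but by itself it does not finish the proof.
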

	
	\begin{corollary}\label{CorMin}
		A minimal triangulation of a circle bundle with Euler number $|\mathcal{E}|\leq 3$  over the two-dimensional sphere  has  $6+6|\mathcal{E}|$ vertices and $36|\mathcal{E}|$ three-dimensional simplices in the total space $E$.  It can be constructed by choosing an arbitrary  triangulation of the sphere with $4|\mathcal{E}|$ triangles, and then following the instruction of Section \ref{SecMain1}.
	\end{corollary}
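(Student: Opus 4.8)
The plan is to prove matching lower and upper bounds, over \emph{all} triangulations of the base sphere simultaneously, for both the number of vertices and the number of $3$‑simplices of a simplicial total space $E$, and then to observe that the construction of Section \ref{SecMain1} attains the two bounds at once. Throughout I treat the case $\mathcal{E}\neq 0$, so that $4|\mathcal{E}|\in\{4,8,12\}$; the trivial bundle needs only the standard separate discussion.

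For the lower bounds, let $\pi\colon E\to B$ be an arbitrary classical triangulation of the bundle with Euler number $\mathcal{E}$ over a triangulated $2$‑sphere $B$. First I would check that $f(B)\ge 4|\mathcal{E}|$. If $f(B)\le 12$, then by Lemma \ref{LemWin} the sphere $B$ admits a winning strategy for the coloring game, so Theorem \ref{ThmMain2} applies and forces $f(B)\ge 4|\mathcal{E}|$; if $f(B)>12$, the inequality is automatic because $12\ge 4|\mathcal{E}|$. Granting $f(B)\ge 4|\mathcal{E}|$, the minimality clause of Theorem \ref{ThmMain1} gives that $E$ has at least $9f(B)\ge 36|\mathcal{E}|$ three‑dimensional simplices. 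For the vertex count, observe that for each vertex $v$ of $B$ the preimage $\pi^{-1}(v)$ is a subcomplex of $E$ (since $\pi$ is simplicial) whose underlying space is the fiber circle, hence a triangulation of $S^1$ and so carries at least $3$ vertices; since a simplicial map sends vertices to vertices, $V(E)$ is the disjoint union of the sets $V(\pi^{-1}(v))$, $v\in V(B)$, whence
\[
v(E)=\sum_{v\in V(B)}\#V\bigl(\pi^{-1}(v)\bigr)\ \ge\ 3\,v(B).
\]
Combining this with the Euler‑characteristic identity $f(B)=2v(B)-4$ for a triangulated sphere and with $f(B)\ge 4|\mathcal{E}|$ yields $v(B)\ge 2|\mathcal{E}|+2$, hence $v(E)\ge 6|\mathcal{E}|+6$.

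For attainment, I would pick any triangulation $B$ of $S^2$ with exactly $4|\mathcal{E}|$ triangles; such a $B$ exists because $4|\mathcal{E}|$ is an even integer $\ge 4$ (for instance a stacked sphere, obtained from $\partial\Delta^3$ by $2|\mathcal{E}|-2$ successive stellar subdivisions of facets, which has $4+2(2|\mathcal{E}|-2)=4|\mathcal{E}|$ triangles and $2|\mathcal{E}|+2$ vertices). Then $v(B)=2|\mathcal{E}|+2$ and the hypothesis $f(B)\ge 4|\mathcal{E}|$ of Theorem \ref{ThmMain1} holds with equality, so the explicit construction of Section \ref{SecMain1} produces a classical triangulation of the bundle with $3v(B)=6|\mathcal{E}|+6$ vertices and $9f(B)=36|\mathcal{E}|$ three‑dimensional simplices in $E$. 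These numbers match the two lower bounds just proved, so this triangulation minimizes the number of vertices and the number of $3$‑simplices simultaneously, which is the assertion of the corollary.

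The step that needs care is precisely that the two lower bounds are tight \emph{at the same time}: equality in $v(E)\ge 3v(B)\ge 3(2|\mathcal{E}|+2)$ forces $f(B)=4|\mathcal{E}|$ exactly, and it is here that the identity $f=2v-4$ does the work; everything else is bookkeeping. The only genuine use of the hypothesis $|\mathcal{E}|\le 3$ is in the lower‑bound step, where it guarantees that any base violating $f(B)\ge 4|\mathcal{E}|$ has at most $12$ triangles and is therefore covered by Lemma \ref{LemWin}; extending the corollary to $|\mathcal{E}|\ge 4$ would require a corresponding strengthening of Lemma \ref{LemWin} to triangulated spheres with more than $12$ triangles, which is exactly what the coloring game is designed to capture but is not established here.
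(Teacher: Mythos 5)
Your proposal is correct and follows essentially the same route as the paper: the lower bound on $f(B)$ via Lemma \ref{LemWin} together with Theorem \ref{ThmMain2} (with the case split $f(B)\le 12$ versus $f(B)>12$, which is exactly where $|\mathcal{E}|\le 3$ enters), at least nine $3$-simplices over each base triangle, at least three vertices in the fiber over each base vertex, and attainment by the construction of Section \ref{SecMain1} over a sphere with exactly $4|\mathcal{E}|$ triangles. You merely spell out details the paper leaves implicit (the vertex lower bound $v(E)\ge 3v(B)$ and the existence of a base with $f(B)=4|\mathcal{E}|$, e.g.\ a stacked sphere), so there is no substantive difference in approach.
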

	\medskip
	
	\medskip
	\textbf{Non-example}  However, there exist triangulated spheres with no winning strategy:
 take a bipyramid, that is, a triangulation of the sphere with $6$ faces. Next, take the stellar subdivision of all the $2$-faces. That is,
	we add a vertex in the center of each of the $6$ triangles, and connect each new vertex to the vertices of the triangle it belongs to.
Altogether  we have $5$ \textit{old}  vertices (initial vertices of the bipyramid), and the $6$ \textit{new} vertices. 
	
	 Each edge necessarily contains an old
	vertex, and by an easy case analysis one concludes that the number of red edges  is at most $4$, which does not suffice, since $f(B)=18$.
	
	\medskip

Now we turn to the general case, the base $B$ is no longer a surface.

\begin{theorem}\label{ThmMain3}

  The trivial circle bundle over any triangulated  base $B$ can be triangulated with $3v(B)$ vertices in the triangulation
  of the total space $E$.

\end{theorem}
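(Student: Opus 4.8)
To prove Theorem~\ref{ThmMain3} the plan is to build the total space directly as an ``ordered product'' of the given complex $B$ with a triangle. Realize the fibre $S^1$ as the boundary $\partial\Delta^2$ of a $2$-simplex, so that it carries exactly three vertices $0,1,2$ (that three is the right number is explained at the end). Fix once and for all a linear order on the vertex set $V(B)$ and the order $0<1<2$ on the vertices of $\partial\Delta^2$. The vertices of the triangulation of $E$ will be the pairs $(u,\varepsilon)$ with $u\in V(B)$ and $\varepsilon\in\{0,1,2\}$; there are exactly $3v(B)$ of them, and the first-coordinate projection $(u,\varepsilon)\mapsto u$ will be the simplicial map inducing $\pi$.

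Next I would write down the simplices explicitly. For a $d$-simplex $\sigma=[u_0<u_1<\dots<u_d]$ of $B$, an edge $\{a,b\}$ with $a<b$ of the triangle $\partial\Delta^2$, and $i\in\{0,\dots,d\}$, put
$$\tau(\sigma,\{a,b\},i)=\bigl[(u_0,a),\dots,(u_i,a),(u_i,b),(u_{i+1},b),\dots,(u_d,b)\bigr],$$
a $(d+1)$-simplex, and let $\mathcal{K}$ be the family of all faces of all such simplices. This $\mathcal{K}$ is precisely the standard product of the two ordered simplicial complexes $B$ and $\partial\Delta^2$: for a fixed pair (a face $\sigma$ of $B$, an edge $\{a,b\}$ of $\partial\Delta^2$) the simplices $\tau(\sigma,\{a,b\},i)$, $i=0,\dots,d$, form the ``staircase'' triangulation of the prism $\sigma\times\Delta^1$, and letting $\{a,b\}$ run over the three edges $\{0,1\},\{1,2\},\{0,2\}$ glues three such prisms cyclically around the triangle. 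By the classical fact that the ordered product of simplicial complexes is a simplicial complex whose geometric realization is canonically the topological product, $|\mathcal{K}|\cong|B|\times|\partial\Delta^2|=|B|\times S^1$, the map $\mathcal{K}\to B$, $(u,\varepsilon)\mapsto u$, is simplicial, and on geometric realizations it is exactly the projection of the trivial bundle. So once $\mathcal{K}$ is known to be a genuine (classical) simplicial complex, the theorem follows, the vertex count $3v(B)$ being immediate from the description of $V(\mathcal{K})$.

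The one remaining point is that $\mathcal{K}$ really is a simplicial complex, i.e. that distinct simplices have distinct vertex sets and any two simplices meet along a common face; I expect this to be the only, and a mild, obstacle. I would handle it either by quoting that the ordered product of simplicial complexes is always a simplicial complex, or directly: inside a fixed $\sigma$ each of the three strips over $\{0,1\},\{1,2\},\{0,2\}$ is the staircase triangulation of a prism, which is well known to be a simplicial complex; two simplices from different strips carry vertices over two different edges of the triangle, so their vertex sets can only share vertices lying over the common endpoint (one of $0,1,2$), and there they meet in an honest common face contained in $\sigma\times\{\varepsilon\}$; and compatibility across a face $\sigma'\subset\sigma$ of $B$ is just the naturality of the staircase triangulation in $\sigma$.

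Finally I would record why the fibre must have three vertices, which is also why the bound reads $3v(B)$ and not $2v(B)$. In any $\pi$-simplicial triangulation of a circle bundle, the preimage of a vertex $u$ of $B$ is the support of a full subcomplex (a simplex maps into $\pi^{-1}(u)$ as soon as all of its vertices do, since a simplicial map carries the relative interior of a simplex onto the relative interior of its image, which cannot be an interior point of a positive-dimensional simplex), and this subcomplex is homeomorphic to the fibre $S^1$; with only two vertices over $u$ it could be at most a single edge, never a circle. Three vertices over $u$ make the fibre the boundary of a triangle, a simplicial $S^1$, and the product construction above then goes through, which finishes the argument.
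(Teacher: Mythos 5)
Your proof is correct, but it goes a genuinely different way than the paper. You build the total space directly as the ordered (staircase) product $B\times\partial\Delta^2$: fix a linear order on $V(B)$, triangulate the fibre as the boundary of a triangle, and take the standard simplicial product, which triangulates $|B|\times S^1$ with vertex set $V(B)\times\{0,1,2\}$ and makes the first-coordinate projection simplicial; this is self-contained, exhibits the trivialization explicitly, and the count $3v(B)$ is immediate. The paper instead stays inside its necklace formalism: it enumerates the vertices of $B$ by $0,\dots,N$, regards $B$ as a subcomplex of $\Delta^N$, assigns to $\Delta^N$ the single necklace $(0,0,1,1,\dots,N,N,\mathbf{0},\mathbf{1},\dots,\mathbf{N})$ and to each face its restriction; this consistent collection has three beads of each colour and mixed two-coloured necklaces, so by Lemma~\ref{LemmaMixed} it is a classical triangulation, and triviality of the bundle is deduced from contractibility of $\Delta^N$ rather than from an explicit product structure. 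The two constructions are not isomorphic (over an edge the paper's necklace is of type $iijjij$, while your product gives the alternating $ijijij$), though both use $9$ tetrahedra over each $2$-simplex. What each buys: your route needs only the classical fact that the ordered product of simplicial complexes triangulates the topological product compatibly with the projections (which you also sketch directly), and it hands you the trivializing homeomorphism; the paper's route is a one-line construction once the necklace machinery is in place and doubles as an illustration of that machinery. Your closing remark on why two vertices per fibre cannot suffice is not needed for the statement (which only asserts achievability of $3v(B)$), but it is a correct observation.
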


To generalize Theorem \ref{ThmMain1} in the spirit of Theorem \ref{ThmMnev}  for arbitrary finite  simplicial complex  $B$,  we need some preparation.
Define a function  $$\mathcal{F}:\{\pm 1,\pm 3\}\rightarrow \{\pm 1/4\}$$  by $$\mathcal{F}(3)=\mathcal{F}(-1)=1/4,  \hbox{ and } \mathcal{F}(-3)=\mathcal{F}(1)=-1/4.$$

Consider a simplicial $1$-cochain  $a\in C^1(B, \mathbb{Z})$ attaining the values $\pm 1$ only. Its coboundary $da$ (here and in the sequel, $d$ is the coboundary operator) is a $2$-cochain attaining the values
from the set $\{\pm 1,\pm 3\}$ only.  Set    $$\mathcal{ G}_a=\mathcal{F}(da)-\frac{3\cdot da}{4} \in C^2(B,\mathbb{Z}).$$  One easily checks that $\mathcal{G}_a$ is an integer  $2$-cochain admiting the values $\pm 1$ and $\pm 2$ only.

\begin{theorem}\label{Thm}  Let $E\rightarrow B$ be a circle bundle over a simplicial complex $B$.
 Assume that  the group $H^2(B,\mathbb{Z})$  has no  elements of order two.
 
Assume there exists a simplicial cocycle $e$ attaining values $\pm1$ and $\pm 2$ on $2$-simplices of the base such that two conditions hold:

(1)  $e$ represents the integer Euler class
of the bundle.

(2) There exists    a simplicial $1$-chain  $a\in C^1(B, \mathbb{Z})$ admitting the values $\pm 1$ only   such that   $e=\mathcal{G}_a$.

Then the circle bundle can be classically  triangulated over the given triangulation of
 $B$.
\end{theorem}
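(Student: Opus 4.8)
The plan is to reduce Theorem \ref{Thm} to the construction already developed for surfaces, using the cochain data $(e,a)$ as a combinatorial blueprint for cutting and regluing the $S^1$-fibers. First I would recall the \textit{local combinatorial formula}: over each $2$-simplex $\sigma$ of $B$ one chooses how many times the triangulated circle fiber is ``twisted'' as it is transported around $\partial\sigma$, and these local twist numbers are exactly the values of a simplicial cocycle representing the Euler class. The point of restricting the cocycle $e$ to values $\pm 1,\pm 2$ is that these are precisely the twist amounts that can be realized over a single triangle of the base \emph{using only three subdivision vertices per vertex of $B$}; this is the same bound $3v(B)$ appearing in Theorems \ref{ThmMain1} and \ref{ThmMain3}. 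So the first step is to set up, over each vertex $w$ of $B$, a copy of a triangulated circle with $3$ vertices (a triangle), and over each edge and $2$-simplex the prism/join pattern from Section \ref{SecMain1}, parametrized by the local data.

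The second step is to explain the role of the $1$-chain $a$ with values $\pm1$. Over each edge $[u,v]$ of $B$ the triangulated circle fiber over $u$ must be identified with the one over $v$ via a simplicial isomorphism of the $3$-vertex triangle, i.e. a cyclic shift by $a([u,v])\in\{\pm1\}$ (the two nontrivial rotations of $\mathbb{Z}/3$). The compatibility of these edge-identifications around a $2$-simplex $\sigma=[u,v,w]$ is measured by $da(\sigma)\in\{\pm1,\pm3\}$: if $da(\sigma)=\pm 3$ the three shifts compose to the identity and the prisms close up with a ``full extra twist'', contributing $\pm1$ to the Euler cocycle; if $da(\sigma)=\pm1$ they compose to a nontrivial rotation, which the construction absorbs by an extra local twist contributing $\pm2$. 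Matching these two cases with the sign conventions is exactly the content of the function $\mathcal F$ and the formula $\mathcal G_a=\mathcal F(da)-\tfrac{3\,da}{4}$: one checks term by term that the Euler cocycle of the assembled complex over $B$ equals $\mathcal G_a$, so condition (2) guarantees it equals the prescribed $e$, and condition (1) guarantees $e$ represents the correct Euler class.

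The third step is to verify that the resulting cell complex is genuinely a \emph{simplicial} complex (no two simplices share the same vertex set) and that $\pi$ is simplicial. Here the hypothesis that $H^2(B,\mathbb Z)$ has no $2$-torsion enters: it ensures that the $\pm1$-valued cochain $a$ can be chosen globally (or adjusted by a coboundary) so that the local gluings are mutually consistent — without $2$-torsion, a $\mathbb Z/2$-valued obstruction to coherently orienting/shifting the fiber triangles vanishes, so the transition data $a$ lifts from a $\mathbb Z/3$-rotation cocycle in a way compatible with the chosen orientation of each fiber. I would phrase this as: the reduction mod the relevant coboundaries of $a$ is controlled by $H^2(B,\mathbb Z/2)$-type data, and the $2$-torsion-freeness kills the ambiguity. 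Finally, counting: over each vertex, edge, $2$-simplex of $B$ we use a fixed bounded number of simplices in $E$ (the same local blocks as in Theorem \ref{ThmMain1}), giving a triangulation with $3v(B)$ vertices; for higher-dimensional simplices of $B$ one takes joins in the obvious way, and simpliciality there is automatic once it holds on the $2$-skeleton.

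The main obstacle I expect is the third step — checking simpliciality and the global coherence of the fiberwise identifications. Locally everything is dictated by $(e,a)$, but one must rule out that, say, two triangles of the base sharing an edge force the fiber over a common vertex to be subdivided in two incompatible ways, or that a short edge-cycle in $B$ creates a degenerate simplex in $E$; this is where the join-of-subdivisions construction from Section \ref{SecMain1} and the $2$-torsion hypothesis have to be used in tandem, and it is the step most likely to require the fine bookkeeping that I am deliberately not carrying out here. The verification that $\mathcal G_a$ is the Euler cocycle of the construction, by contrast, is a direct (if fiddly) computation with the four cases $da(\sigma)\in\{\pm1,\pm3\}$, already half-done by the observation in the excerpt that $\mathcal G_a$ takes values $\pm1,\pm2$.
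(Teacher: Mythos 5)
Your proposal misplaces the two real difficulties of this theorem. First, the extension over higher skeleta is not ``automatic joins.'' In the paper's argument one builds, from the given edge--orientation $a$, the small framed necklace triangulation over the $2$-skeleton (as in Section \ref{SecMain1} and Lemma \ref{lemmaSmall}); the nontrivial point is that this collection of necklaces extends over each $3$-simplex. That extension can genuinely fail: if the induced $2$-cochain takes the value $1/4$ on all four faces of a $\sigma^3$, the restriction to $\partial\sigma^3$ is the Hopf bundle and does not bound. It is exactly the cocycle property of $e$ (equivalently of $\mathcal{F}(da)$) that excludes this scenario, and a further, separate lemma (a version of Mn\"ev's Prop.\ 7, assembling all face necklaces of a $k$-simplex, $k\geq 4$, into a single necklace) handles the higher skeleta. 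None of this appears in your outline; ``take joins in the obvious way'' skips precisely the step where hypothesis (1)--(2) does its work. Relatedly, your claim that the Euler cocycle of the assembled complex ``equals $\mathcal{G}_a$'' term by term cannot be right: the local combinatorial formula assigns to every $2$-simplex of a classical (or semisimplicial) triangulation a value of absolute value at most $1/2$, so values $\pm 1,\pm 2$ are never realized over a single triangle. What the construction realizes is the rational cocycle $\mathcal{F}(da)$ with values $\pm 1/4$, which differs from $e=\mathcal{G}_a$ by the \emph{rational} coboundary $\tfrac{3}{4}\,da$.

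This last point is where the hypothesis on $H^2(B,\mathbb{Z})$ actually enters, and your use of it is not correct. There is no ``$\mathbb{Z}/2$-valued obstruction to coherently orienting the fiber triangles'': the cochain $a$ is handed to you by condition (2) (it is not to be chosen or adjusted --- adjusting it would change $\mathcal{G}_a$), and any edge orientation yields a consistent family of framed small necklaces over the $2$-skeleton with no gluing obstruction whatsoever; classical simpliciality is then obtained by doubling beads so that every necklace has three beads of each color (Lemma \ref{LemmaMixed}), again with no obstruction. The torsion hypothesis is needed at the very end: since $\mathcal{F}(da)$ and $e$ agree only up to a rational coboundary, the local formula pins down the \emph{rational} Euler class of the constructed bundle, so its integer Euler class agrees with $[e]$ only up to (two-primary) torsion; absence of elements of order two in $H^2(B,\mathbb{Z})$ is what lets one conclude the constructed bundle is isomorphic to the given $E$ (circle bundles being classified by the integer Euler class). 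So the overall skeleton of your plan (small fiberwise blocks governed by $a$, Euler class bookkeeping via $\mathcal{F}$ and $\mathcal{G}_a$) points in the right direction, but as written it omits the $3$-skeleton extension argument and assigns the $2$-torsion hypothesis to a step where nothing needs it.
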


\section{Preliminaries}\label{SecPrel}
	In this section we first present a convenient way to describe  a circle bundle by a collection of necklaces and morphisms. Next, in the language of necklaces, we present
	the local combinatorial formula for the rational Euler class. In the section we follow (not literally) the papers \cite{Mnev1} and \cite{Mnev3}.

	\subsection{Necklaces} \textit{A necklace} is a cyclic word with letters from an alphabet.

	Each triangulated circle  bundle $ E \xrightarrow[\text{}]{\pi} B$ yields a collection of necklaces, a necklace per each simplex of the base, and a collection of morphisms.
	
	Here how it goes, see Fig. \ref{FigNecklace1}:\begin{enumerate}
		\item \textbf{A necklace over a simplex $\sigma^k$ from the base.}
		
		The restriction of a circle  bundle to a  $k$-dimensional simplex $\sigma^k\in B$  is a triangulation of $\sigma^k\times S^1$.
		Its combinatorics    is described in terms of a {necklace} $\mathcal{O}(\sigma^k)$, whose beads are labeled  (= \textit{colored}) by the vertices of the simplex  $\sigma^k$.

		Let a simplex $\rho^{k+1} \in \pi^{-1}(\sigma^k)$. 
		There is exactly one vertex in $Vert(\sigma)$ which has an edge of $\rho^{k+1}$  in the preimage. Let us label
		$\rho^{k+1}$ by this vertex.  Now take an inner point $x$ of $\sigma^k$. Its preimage $\pi^{-1} (x)$ is a circle
		intersecting all the $k+1$-dimensional simplices in $\pi^{-1}(\sigma^k)$. Their labels form  the necklace $\mathcal{O}(\sigma^k)$.
		
		\item \textbf{Morphisms.}
		If    $\sigma^i\subset \sigma^k$  then the necklace $\mathcal{O}(\sigma^i)$ is obtained from $\mathcal{O}(\sigma^k)$ by deleting of all the beads that are labeled by the vertices from $ \sigma^k\setminus \sigma^i$. In other words, there is an (injective) morphism $$\phi_{ij}:\mathcal{O}(\sigma^i)\rightarrow \mathcal{O}(\sigma^j),$$
		which maps the beads keeping their labels and the cyclic order.
		
		Morphisms are \textit{consistent}, that is, satisfy the natural properties: whenever $ \sigma^i\subset \sigma^j\subset \sigma^k$, we have $\phi_{ik}=\phi_{jk}\circ \phi_{ij}$, and $\phi_{ii}=id$.
		
	\end{enumerate}
	
An example of a collection of  necklaces associated  with two adjacent triangles and their common edge is presented in Fig. \ref{FigNecklaces}.

\begin{figure}[h]
\includegraphics[width=6cm]{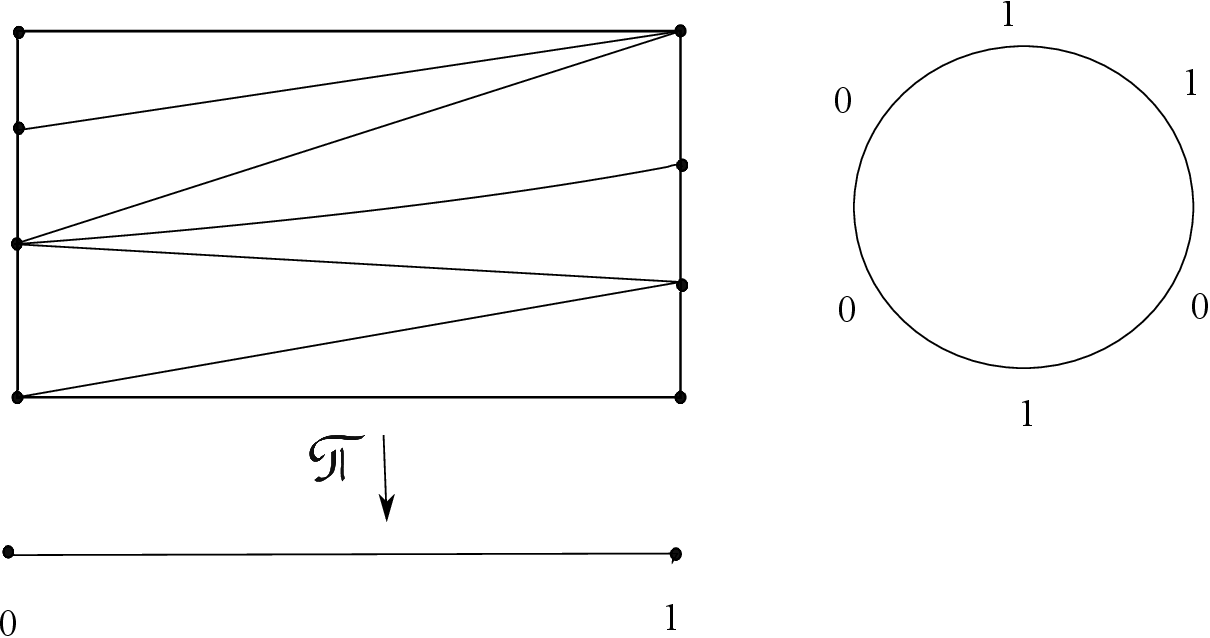}
\caption{A necklace over a one-dimensional simplex. We depict the  cylinder $\pi^{-1}([01])$ as a triangulated quadrilateral, assuming that the upper and the bottom sides are glued. }
\label{FigNecklace1}
\end{figure}

\begin{figure}[h]
\includegraphics[width=10cm]{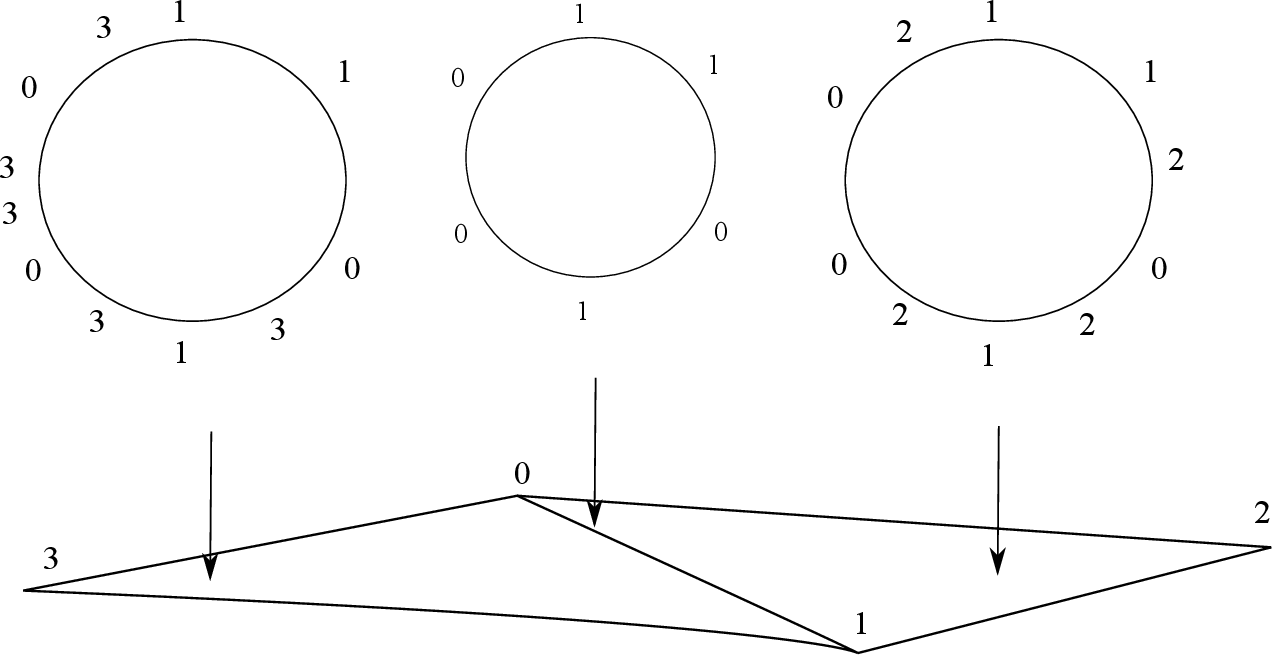}
\caption{ This  picture  describes a triangulated fiber bundle over the union of the two triangles.}
\label{FigNecklaces}
\end{figure}

	\medskip
	We record necklaces as non-cyclic words, assuming that a local numbering $0,1,..,k$ for the vertices  of the simplex $\sigma^k$   is fixed. We write e.g., $O(\sigma^3)=021013$, keeping in mind that  $021013=210130=101302=013021.$
	\medskip
	
	\medskip
	It is proven in \cite{Mnev1} (also easy to see) that given a triangulated base $B$, each consistent collection of necklaces and morphisms defines a semisimplicially triangulated  circle bundle.
	Indeed, an individual necklace restores the combinatorics of the preimages of a simplices, and the morphisms give a way of patching together all these trivial pieces.

	\newpage
	
	There is a simple condition of   classical simplicial triangulations:
	
	\begin{lemma}\label{LemmaMixed}\cite{Mnev1}
		A consistent collection of necklaces and morphisms yields a classicaly simplicialy  triangulated circle bundle iff
		\begin{enumerate}
			
			\item For each of the necklaces there are no less than $3$ beads of each color, and
			
			\item for  all the  two-colored necklaces, the colors are \textit{mixed}, that is, the beads do not come in two monochromatic blocks.
			
			E.g., in the necklace $010111$  the colors are  mixed, whereas in $000111$  they are not.
		\end{enumerate}
	\end{lemma}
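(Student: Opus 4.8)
\textbf{Proof plan for Theorem \ref{Thm}.}

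The plan is to construct an explicit consistent collection of necklaces and morphisms over $B$, verify the conditions of Lemma \ref{LemmaMixed}, and then check that the resulting circle bundle has Euler class represented by $e=\mathcal{G}_a$. Fix an orientation (total order) on the vertices of $B$, so every oriented edge $[uv]$ with $u<v$ carries the value $a([uv])=\pm 1$. The key idea is the same "necklace template" used in Theorem \ref{ThmMain1}: over each vertex $v$ put a necklace with a fixed number of $v$-colored beads (we will use $3$ beads per color, matching the $3v(B)$-vertices philosophy, or possibly $4$ to give room), and over each edge $[uv]$ the two-colored necklace is determined by the value $a([uv])$, namely a "mixed" word in which the $u$-beads and $v$-beads alternate in a prescribed pattern; the two possible values $\pm1$ correspond to the two cyclic shift classes of such mixed patterns. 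The morphisms from lower to higher simplices are the obvious inclusions of sub-necklaces that forget the colors not present; the only thing to arrange is that on each $2$-simplex $[uvw]$ the three edge-necklaces patch together consistently, i.e.\ there is a necklace $\mathcal{O}([uvw])$ whose three two-colored sub-necklaces are exactly the ones prescribed by $a$ on the three edges. This is a finite local condition, and it is here that the cochain $da$, valued in $\{\pm1,\pm3\}$, enters: the cyclic "winding" of the three alternating patterns around a triangle must add up correctly, and that sum is precisely read off from $da([uvw])$.

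Concretely, the steps are as follows. First I would record the combinatorial dictionary: a two-colored mixed necklace on colors $u,v$ with $3$ beads of each color has a $\mathbb{Z}$-valued invariant (its "shift" relative to a chosen reference), and I would normalize so that this invariant equals $a([uv])\in\{\pm1\}$. Second, for a $2$-simplex $\sigma=[uvw]$ I would write down the candidate three-colored necklace $\mathcal{O}(\sigma)$ as an explicit word of length $9$ (three beads of each of the three colors) whose shape depends only on the three values $a([uv]),a([vw]),a([uw])$, and check that deleting one color recovers the prescribed two-colored necklace on the remaining pair — this is the consistency of morphisms $\phi$ on the $2$-skeleton. Third, I would extend to $3$-simplices and higher: over a $k$-simplex we take the necklace of length $3(k+1)$ obtained by the same alternating rule, and verify $\phi_{ik}=\phi_{jk}\circ\phi_{ij}$; since everything is built from the single rule "interleave colors according to the edge values $a$", consistency in all dimensions reduces to the $2$-dimensional check already done, together with the fact that $a$ is a genuine cochain (so the edge values are globally well defined). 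Fourth — the arithmetic heart — I would compute the local combinatorial formula for the rational Euler class of this necklace bundle on each $2$-simplex $\sigma=[uvw]$ and show it equals $\mathcal{F}(da(\sigma))$; then, using that $e=\mathcal{G}_a=\mathcal{F}(da)-\tfrac{3\,da}{4}$ differs from $\mathcal{F}(da)$ by the exact integer cochain $-\tfrac34 da=-\tfrac34 d a$, conclude that the rational Euler class of the constructed bundle and the cohomology class of $e$ agree in $H^2(B,\mathbb{Q})$. Fifth, I would upgrade from rational to integral: by hypothesis (1) $e$ already represents the integral Euler class, and by hypothesis the $2$-torsion of $H^2(B,\mathbb{Z})$ vanishes, so the map $H^2(B,\mathbb{Z})\to H^2(B,\mathbb{Q})$ is injective (the kernel is exactly the torsion, which here has no $2$-part, but in fact I need: the integral class is determined by its image — so I should state this as: no $2$-torsion guarantees the reduction we performed, dividing by $4$ in $\mathcal{G}_a$, does not lose integral information, and the constructed bundle, being the one whose Euler class reduces to $e$ rationally, is the one with integral Euler class $e$). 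Finally, I would invoke Lemma \ref{LemmaMixed}: by construction every necklace has exactly $3\geq 3$ beads of each color, and every two-colored necklace is mixed (the alternating rule never produces two monochromatic blocks, since $a$ takes only values $\pm1$ and these correspond to the genuinely interleaved patterns), so the collection yields a classical simplicial triangulation.

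I expect the main obstacle to be the fourth step, the verification that the local combinatorial formula for the rational Euler class of the three-colored necklace $\mathcal{O}([uvw])$ evaluates to exactly $\mathcal{F}(da([uvw]))\in\{\pm\tfrac14\}$ in all four cases $da\in\{\pm1,\pm3\}$. This is where the somewhat mysterious-looking normalization constant $\tfrac34$ and the two-element target $\{\pm\tfrac14\}$ of $\mathcal{F}$ must be reverse-engineered from the combinatorics: one has to carry out the bead-counting of the local formula (as recalled from \cite{Mnev3}) for each shift pattern and confirm the bookkeeping closes up. A secondary subtlety is the transition from rational to integral Euler class — I must make sure the "no $2$-torsion" hypothesis is used in exactly the right place, namely that two circle bundles with the same rational Euler class and one of them equal to the prescribed one differ by a $2$-torsion class (because $\mathcal{G}_a$ and the naive integral lift can a priori differ by something killed by $4$, hence of order dividing $4$; the $2$-torsion-free assumption forces them to agree). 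Everything else — the morphism consistency, the mixedness, the three-beads-per-color bound — is a routine but careful unwinding of the single interleaving rule that defines the template.
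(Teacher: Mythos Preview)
Your proposal addresses the wrong statement. The lemma in question is Lemma~\ref{LemmaMixed}, the combinatorial criterion for when a consistent collection of necklaces yields a \emph{classical} simplicial triangulation (at least three beads of each color, and every two-colored necklace mixed). Your text is explicitly headed ``Proof plan for Theorem~\ref{Thm}'' and its entire content is the construction of a triangulated bundle with prescribed Euler class over an arbitrary base --- that is Theorem~\ref{Thm}, not the lemma. You do not prove Lemma~\ref{LemmaMixed} at all; you \emph{invoke} it in your final paragraph as a black box.

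Note also that the paper supplies no proof of Lemma~\ref{LemmaMixed}: it is stated with attribution to \cite{Mnev1} and used as an imported tool. An actual proof would unwind what it means for the total space $E$ to be a genuine simplicial complex rather than a $\Delta$-set: every simplex embedded, any two simplices meeting in a common face. Condition~(1) ensures each fiber $\pi^{-1}(v)$ is a simplicial circle of length $\geq 3$; condition~(2) is exactly what prevents two distinct triangles in the cylinder $\pi^{-1}([uv])$ from sharing all three vertices. None of that analysis appears in your write-up.

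If your intent was really Theorem~\ref{Thm}, your plan diverges from the paper's in one structural respect worth flagging. The paper first builds a \emph{small} semisimplicial bundle with two beads per color, then extends it skeleton by skeleton --- the cocycle condition on $e$ is what allows extension across the $3$-skeleton, and extension beyond dimension $3$ is shown to be unconditional --- and only afterward doubles beads to reach the classical range. Your assertion that consistency in all dimensions ``reduces to the $2$-dimensional check already done, together with the fact that $a$ is a genuine cochain'' glosses over the $3$-skeleton obstruction, which is precisely where the hypothesis that $e=\mathcal{G}_a$ is a cocycle is consumed.
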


	\medskip

	\subsection{Local combinatorial formula (LCF)}

	The local combinatorial formula for the Euler class of a triangulated oriented circle bundle appeared independently in \cite{Mnev3} and \cite{Igusa}.
	It reads as follows.

	Given a triangulated  circle bundle, define the following  $2$-cochain $\varepsilon$:
	
	Pick $\sigma^2 \in B$. We may assume that its vertices are numbered by $0,1,2$, and the numbering agrees with the orientation of the simplex.
	According to our construction, the circle bundle yields a necklace in these three letters.  Each \textit{multicolored triple of beads}, that is a  triple of beads colored by $0$, $1$, and $2$, are positioned on the necklace either with\textit{ positive} or with \textit{negative orientation},  that is, either $012$ or $021$ respectively.
	By definition, the cochain $\varepsilon$ assigns to $\sigma^2 $ the rational  number  $$\varepsilon(\sigma^2)=\frac{\sharp(neg)-\sharp(pos)}{2\cdot \sharp(0)\cdot \sharp(1)\cdot \sharp(2)}.$$
	
	Here  $\sharp(neg)$ is the number of negatively oriented multicolored triples in the necklace $\mathcal{O}(\sigma^2)$,
	$\sharp(pos)$ is the number of positively oriented multicolored triples, $\sharp(i)$ is the number of beads of color $i$.
	\begin{theorem}
		The cochain $\varepsilon$ is a cocycle representing the rational Euler class of the bundle.
		
	\end{theorem}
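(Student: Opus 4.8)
The plan is to prove the two assertions separately: first, that $\varepsilon$ is a cocycle, i.e. $d\varepsilon = 0$; second, that its cohomology class is the rational Euler class of the bundle.

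\emph{Step 1: $\varepsilon$ is a cocycle.} I would argue locally on a $3$-simplex $\sigma^3 \in B$ with vertices numbered $0,1,2,3$. The value $(d\varepsilon)(\sigma^3)$ is the alternating sum $\varepsilon(123)-\varepsilon(023)+\varepsilon(013)-\varepsilon(012)$. The key object is the necklace $\mathcal O(\sigma^3)$ in the four letters $0,1,2,3$; by the morphism compatibility of the preliminaries, each $\mathcal O(\sigma^2_i)$ is obtained from $\mathcal O(\sigma^3)$ by erasing the beads of the missing color. The plan is to express $2\cdot\sharp(0)\sharp(1)\sharp(2)\sharp(3)\cdot (d\varepsilon)(\sigma^3)$ as a sum over all multicolored \emph{quadruples} of beads $(b_0,b_1,b_2,b_3)$ of an expression counting, with signs, the triangles they span, and to show this vanishes. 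Concretely, for a fixed quadruple sitting in some cyclic order on the necklace, the four induced $2$-simplex orientations (one for each way of deleting one bead) contribute a total of $0$: among the $4!=24$ cyclic patterns of four distinct beads there are only $3$ distinct cyclic orders, and a direct check shows that in each case the signed count $\pm1\pm1\pm1\pm1$ over the four faces cancels. Summing over quadruples and dividing out the normalization gives $d\varepsilon=0$. I expect the bookkeeping here --- matching the combinatorial orientation of a triple of beads with the algebraic sign $\pm\varepsilon(\sigma^2)$ coming from the coboundary, and checking that the per-quadruple contribution is zero --- to be the main obstacle, though it is a finite and essentially elementary verification once set up carefully.

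\emph{Step 2: the class of $\varepsilon$ is the Euler class.} Here I would use the characterization of the Euler class by naturality and normalization, or equivalently reduce to the case of the base $S^2$ where $H^2(B,\mathbb Q)=\mathbb Q$ and a class is determined by its evaluation on the fundamental cycle. Given a triangulated circle bundle over an oriented closed surface $B$, I would compute $\langle [\varepsilon], [B]\rangle = \sum_{\sigma^2} \varepsilon(\sigma^2)$ and identify it with the Euler number; the rational weights $\tfrac{1}{2\sharp(0)\sharp(1)\sharp(2)}$ are exactly what is needed to make the total count independent of the triangulation and equal to the intersection number of a generic section with the zero section (equivalently, the signed count of how the fibers ``twist'' as one traverses $B$). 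Alternatively, and more robustly, I would invoke that the rational Euler class is the unique (up to the above normalization) natural characteristic class of circle bundles, observe that $\varepsilon$ is manifestly natural with respect to simplicial maps of bases that lift to bundle maps (because it is defined purely from the necklace data, which pulls back), and pin down the normalization constant by evaluating on the Hopf bundle over the standard minimal triangulation of $S^2$ --- e.g. the one in \cite{MS} --- where $\sum_{\sigma^2}\varepsilon(\sigma^2)=1$. This reference computation, plus naturality, forces $[\varepsilon]$ to equal the rational Euler class for every triangulated circle bundle. Since the detailed derivation of the local formula already appears in \cite{Mnev3} and \cite{Igusa}, I would only sketch Step 2 and give full detail for the cocycle identity in Step 1.
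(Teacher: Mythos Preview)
The paper does not prove this theorem. It is stated in the Preliminaries section as a known result, with the attribution ``The local combinatorial formula for the Euler class of a triangulated oriented circle bundle appeared independently in \cite{Mnev3} and \cite{Igusa},'' and no argument is given. So there is no in-paper proof to compare your proposal against; you are supplying a proof where the paper simply quotes the literature.

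That said, your outline is the standard one and is essentially what is done in the cited references. A few remarks. In Step~1, your reduction to a per-quadruple cancellation is correct: multiplying $(d\varepsilon)(\sigma^3)$ by $2\,\sharp(0)\sharp(1)\sharp(2)\sharp(3)$ turns each face-term into a sum over quadruples, and the content is that for any oriented cyclic arrangement of four labeled beads the alternating sum of the four induced triple-orientations vanishes. Note, however, that there are $(4-1)!=6$ oriented cyclic orders of four distinct beads, not $3$; the reduction to $3$ cases is legitimate only after using the obvious antisymmetry under reversing the cyclic orientation. In Step~2, both routes you sketch (evaluation on a closed oriented surface, or naturality plus a single normalization on the Hopf bundle) are valid; the second is cleaner and is closer in spirit to how \cite{Mnev3} and \cite{Igusa} proceed. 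Your own closing remark---that the detailed derivation already appears in those references---is exactly the paper's stance: it cites rather than proves.
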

	
	\textbf{Remark:}
	Since $\sharp(0)\cdot \sharp(1)\cdot \sharp(2)=
	\sharp(neg)+\sharp(pos)$  is the number of all multicolored triples,
	the  absolute value of the cochain  $\varepsilon$ on a simplex does not exceed $1/2$ (for semisimplicial triangulations)  and is strictly smaller than $1/2$ (for classical simplicial triangulations).
	
\medskip

	If the base $B$ is an oriented closed $2$-dimensional surface, the Euler number equals the sum of the values of the cochain over all the triangles in $B$:
	$$\mathcal{E}=\sum_{\sigma^2 \in B}\varepsilon(\sigma^2).$$
	Since each of the summands does not exceed $1/2$, we conclude that $f(B)\geq 2\mathcal{E}$.

	\section{Auxiliary construction: small  necklaces and small semisimplicial triangulations. Proof of Theorem \ref{ThmMain3}}
	Let $B$ be an orientable triangulated closed surface. We assume that all its $2$-simplices are oriented consistently with the global orientation of the surface.
	We shall build a consistent collection of necklaces with   morphisms.
	
	\subsection{Small  necklaces}
	
	A necklace which is centrally symmetric and contains exactly two beads of each color is called a\textit{ small symmetric necklace} or a \textit{small necklace}  for short. Such necklaces will be our building blocks.
	
	\medskip
	
	\textbf{Example:} \begin{enumerate}
		\item There exists only one  type (up to an isomorphism) of two-colored small  necklaces, that is, $0101$.
		\item There exist two types of three-colored small necklaces,
		$012012$ and $021021$.  According to the LCF, they contribue  the values of the cochain   $-1/4$ and $1/4$ respectively.
		
	\end{enumerate}

	\medskip
	
	\textbf{Example:}
	There are two morphisms from  $0101$  to $012012$. To record one of them, one chooses  a bead and makes it bold.
	We assume that a morphism maps a bold bead to a bold one.
	So the two  (uniquely defined by bold beads) morphisms are:
	\begin{enumerate}
		\item  $\mathbf{0}101$  to $012\mathbf{0}12$, and
		\item  $\mathbf{0}101$  to $\mathbf{0}12{0}12$.
	\end{enumerate}
	
	\medskip
	
	This example motivates the following
	\begin{definition}
		A \textit{framed  small necklace}  is a small necklace  with one bold bead for each color.
		\medskip

		A (semisimplicial) triangulation  of a circle bundle  is called \textit{small }  if all the associated necklaces are small ones.
	\end{definition}

	To build a  small  triangulated   circle bundle,  we  create a consistent collection of framed small necklace, one necklace per a simplex of the base.  \textit{Consistency} means that  when passing to a face  $\tau$ of a simplex $\sigma$,  the bold  beads are preserved.
	
	\medskip
	
	Take a triangle in the base  and label its vertices by the numbers $0,1,2$ such that the orientation agrees with the orientation of $B$.
	Let us list all possible small framed necklaces for the triangle and compute their contribution to LCF.

\begin{lemma}\label{LemmaComput}
  Small necklaces contributing   $-1/4$ to LCF are:  $\mathbf{012}012$,  $\mathbf{01}201\mathbf{2}$, $\mathbf{0}120\mathbf{12}$, $\mathbf{0}1\mathbf{2}0\mathbf{1}2$,
	
	Small necklaces  contributing   $1/4$ to LCF are: $\mathbf{021}021$,  $\mathbf{02}102\mathbf{1}$, $\mathbf{0}210\mathbf{21}$, $\mathbf{0}2\mathbf{1}0\mathbf{2}1$.
\end{lemma}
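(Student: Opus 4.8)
The plan is a direct finite enumeration that exploits the rigidity of small necklaces to keep the list short, together with a single orientation count.

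\emph{Step 1: the two underlying small necklaces.} By definition a small necklace over the triangle with vertices $\{0,1,2\}$ has length $6$, carries exactly two beads of each colour, and is invariant under the half-turn (rotation by $3$). Half-turn invariance forces the beads in positions $i$ and $i+3$ to carry the same colour, so the cyclic word has the shape $abc\,abc$; having exactly two beads of each colour then forces $\{a,b,c\}=\{0,1,2\}$. Up to cyclic rotation of the beads this leaves precisely two necklaces, $012012$ and $021021$, and they are genuinely different as cyclic words (in one the bead coloured $0$ is immediately followed, in the fixed cyclic direction, by a bead coloured $1$, in the other by a bead coloured $2$).

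\emph{Step 2: their LCF values.} These are already recorded in the Example preceding the lemma: $012012$ contributes $-1/4$ and $021021$ contributes $+1/4$. For completeness one reproves this from the definition of $\varepsilon$: placing the colour-$i$ beads at positions $\{i,i+3\}$, there are $\sharp(0)\sharp(1)\sharp(2)=8$ multicoloured triples, and reading each triple around the oriented necklace one finds that $6$ of them have cyclic type $012$ and $2$ have cyclic type $021$ (for instance the triple with colours $0,1,2$ at positions $0,4,2$ reads $0,2,1$ going around, hence is negative). Thus $\varepsilon=\frac{\sharp(neg)-\sharp(pos)}{2\sharp(0)\sharp(1)\sharp(2)}=\frac{2-6}{16}=-\tfrac14$ for $012012$, and reversing the cyclic order (which passes to $021021$) interchanges the two types, giving $+\tfrac14$.

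\emph{Step 3: the framings.} A framing chooses one bold bead per colour, so each of the two necklaces admits $2^{3}=8$ framings. The only nontrivial colour-preserving cyclic symmetry of $012012$ (and of $021021$) is the half-turn, which identifies a framing with the one obtained by shifting every bold bead by $3$; hence the $8$ framings collapse to exactly $4$ isomorphism classes. Normalising so that the bold colour-$0$ bead sits at the front, the four classes for $012012$ arise by independently putting the bold colour-$1$ bead in position $1$ or $4$ and the bold colour-$2$ bead in position $2$ or $5$ — these are precisely the words $\mathbf{012}012$, $\mathbf{01}201\mathbf{2}$, $\mathbf{0}120\mathbf{12}$, $\mathbf{0}1\mathbf{2}0\mathbf{1}2$ of the lemma, and symmetrically for $021021$. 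Since the formula for $\varepsilon$ never refers to the framing, every framing of $012012$ inherits the value $-1/4$ and every framing of $021021$ inherits $+1/4$, which is exactly the assertion.

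There is no genuine obstacle here; the only point requiring care is fixing once and for all the cyclic direction in which a necklace is read and the cyclic order of $\{0,1,2\}$ declared positive, consistently with the chosen orientation-compatible labelling of the triangle. After that, both the orientation count of Step 2 and the normalisation of Step 3 are purely mechanical, and the lemma follows.
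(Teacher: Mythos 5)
Your proof is correct and follows essentially the same route the paper intends (the lemma is stated there as the outcome of a direct enumeration and LCF computation, left implicit): you correctly identify $012012$ and $021021$ as the only small necklaces, verify the $6$ versus $2$ split of multicoloured triples giving $\mp 1/4$, and reduce the $8$ framings of each to the listed $4$ via the half-turn symmetry, noting that $\varepsilon$ is framing-independent.
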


	\medskip
	
	A framing defines an orientation on the edges of the triangle:  the edge $ij$   of the triangle  is oriented as  $\overrightarrow{ij}$ iff moving along the necklace  $\mathcal{O}(ij)$  from a bold bead $i$ in the positive  (counterclockwise) direction,  one first meets the bold bead $j$.
	
	Examples are given in Fig. \ref{FigFr}.

\begin{figure}[h]
\includegraphics[width=8cm]{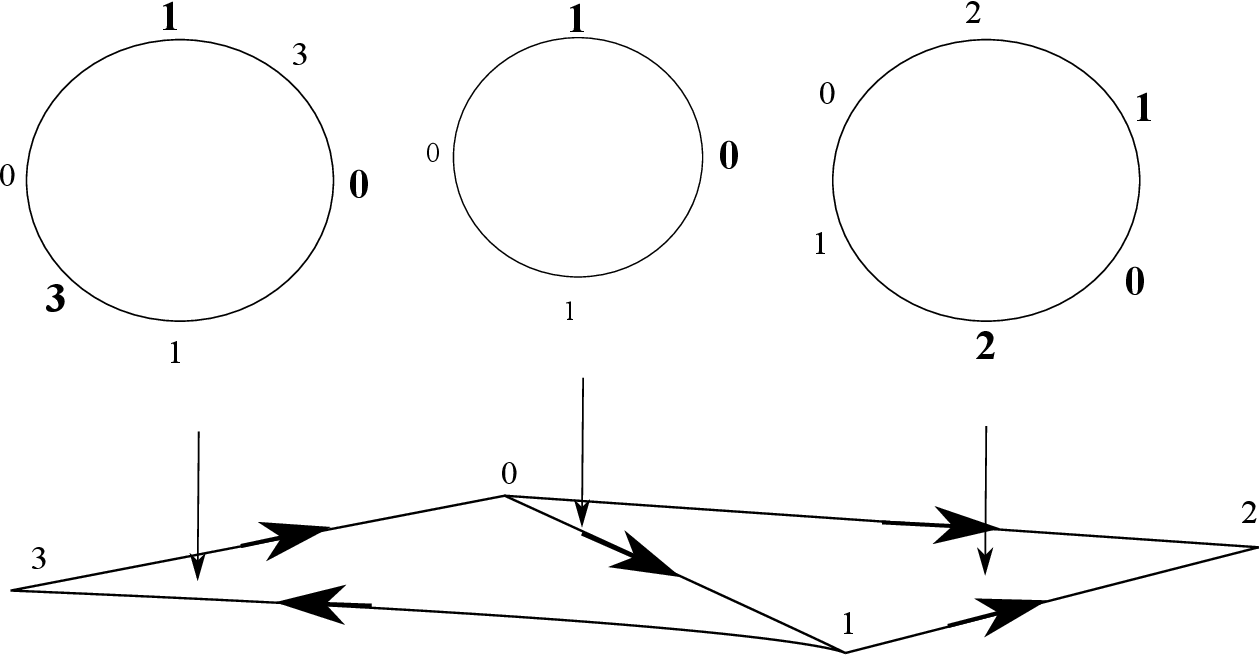}
\caption{A framing defines an orientation on the edges. }
\label{FigFr}
\end{figure}

	\medskip
	
	For a small triangulation of a bundle  over $B$
	we say that an edge of a triangle $\sigma^2\in B$  is \textit{positive  in $\sigma^2$ }  if its orientation agrees with the orientation of the triangle (which in turn agrees with the global orientation of the base), and \textit{negative in $\sigma^2$ }  otherwise.

	One immediately checks that a framed small necklace contributes $1/4$ iff the number of its positive edges is odd.
	
	\medskip
	
	Denote by $Or(B)$ the set of all possible orientations of the edges of $B$. Alternatively $Or(B)$ can be viewed as the set of all $1$-cochains admitting values $\pm 1$ only.
	\begin{lemma}\label{lemmaSmall} \begin{enumerate}
			\item Small  framed   (semisimplicial) triangulations  of  circle bundles with a given triangulated orientable two-dimensional base $B$
			are in a natural bijection with $Or(B)$.
			\item For $a\in Or(B)$, the  $2$-cochain $\mathcal{F}(da)$  is a representative of the Euler class of the circle bundle associated with  the orientation $a$.  Here $d$ is the coboundary operator, $\mathcal{F}$ is defined in Section \ref{SecIntro}.
		\end{enumerate}
		
	\end{lemma}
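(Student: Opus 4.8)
The plan is to unwind the two claims of Lemma \ref{lemmaSmall} in turn. For part (1), the construction of the previous section already assigns to every triangle $\sigma^2$ of $B$ a framed small necklace, and I want to show that all the data of such a consistent collection is captured by an orientation of the edges. In one direction, I described above how a framing of the small necklace over a triangle induces an orientation $\overrightarrow{ij}$ on each edge $ij$; consistency of the framing (bold beads preserved under passing to faces) shows immediately that the orientation assigned to an edge $ij$ is the same whether it is computed inside $\sigma^2$ or inside the necklace $\mathcal O(ij)$ itself, so the orientations glue to a well-defined element of $Or(B)$. Conversely, given $a\in Or(B)$ I must recover, for each simplex, a framed small necklace, and check the recovery is unique and consistent. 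Over an edge $ij$ there is only one small necklace $iijj$ up to isomorphism (it is the two-colored $0101$ of the Example), and specifying which bead of color $i$ and which bead of color $j$ is bold, subject to the orientation rule ``going counterclockwise from bold $i$ one first meets bold $j$'', pins the framing down to a unique choice once the underlying necklace position is fixed — so an orientation of $ij$ determines the framed necklace over $ij$. Over a triangle, Lemma \ref{LemmaComput} lists the eight framed small necklaces; I claim the assignment of the triple of edge-orientations is a bijection from these eight onto the eight elements of $Or(\partial\sigma^2)$, which is a finite check (four orientations have an odd number of positive edges, matching the four ``$021$-type'' necklaces contributing $1/4$, and four have an even number, matching the ``$012$-type'' necklaces). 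Since the morphisms between small necklaces are themselves determined by where bold beads go, a globally consistent choice of framed necklaces is exactly a globally consistent choice of edge orientations, i.e. an element of $Or(B)$. This gives the natural bijection.

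For part (2), fix $a\in Or(B)$ and the associated small framed triangulation; I want to show its LCF cocycle $\varepsilon$ equals $\mathcal F(da)$. By the remark following Lemma \ref{LemmaComput}, a framed small necklace over $\sigma^2$ contributes $\varepsilon(\sigma^2)=1/4$ precisely when the number of positive edges of $\sigma^2$ is odd, and $-1/4$ when it is even. So I need only verify that $\mathcal F(da)(\sigma^2)=1/4$ under exactly the same parity condition. Writing $a$ on the three edges of $\sigma^2$ as $\pm1$, the value $da(\sigma^2)=\pm a_{01}\mp a_{12}\pm a_{02}$ (with the standard simplicial signs) lies in $\{\pm1,\pm3\}$, and a direct check of the $2^3=8$ sign patterns shows: $da(\sigma^2)\in\{3,-1\}$ exactly when an odd number of the oriented edges agree with the boundary orientation of $\sigma^2$ (i.e. an odd number of positive edges), and $da(\sigma^2)\in\{-3,1\}$ otherwise. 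Since $\mathcal F$ was defined by $\mathcal F(3)=\mathcal F(-1)=1/4$ and $\mathcal F(-3)=\mathcal F(1)=-1/4$, this matches $\varepsilon$ triangle by triangle. Hence $\mathcal F(da)=\varepsilon$ as $2$-cochains, and since $\varepsilon$ is a cocycle representing the rational Euler class (and here it is the rational Euler class of the circle bundle built from the small triangulation determined by $a$), $\mathcal F(da)$ represents that Euler class, proving (2).

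I expect the main obstacle — more bookkeeping than genuine difficulty — to be the sign conventions in part (2): one must be careful that ``positive edge of $\sigma^2$'' as defined via the framing/necklace rule really does correspond to ``the oriented edge $\overrightarrow{ij}$ agrees with the induced boundary orientation of $\sigma^2$,'' and that this in turn is the same parity controlled by the simplicial coboundary formula for $da$. Once the dictionary between the three possible local pictures (the necklace over the edge, the necklace over the triangle, the cochain $a$) is set up consistently, both the bijectivity in (1) and the identity $\mathcal F(da)=\varepsilon$ in (2) reduce to the finite verification over the eight framed small necklaces of Lemma \ref{LemmaComput}, which I would present as a short table rather than in prose.
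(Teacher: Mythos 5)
Your proposal is correct and follows essentially the same route as the paper: the paper's (much terser) proof of (1) is exactly your two-way correspondence between framings and edge orientations via the bold-bead rule, and its proof of (2) is the same finite check combining Lemma \ref{LemmaComput} with the LCF, which you merely spell out via the parity computation $da(\sigma^2)=2p-3$ for $p$ positive edges. Your extra care with the sign conventions is sound and matches the paper's statement that a framed small necklace contributes $1/4$ exactly when the number of its positive edges is odd.
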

	\begin{proof}
		(1) We  have defined above  a way to associate an element of $Or(B)$ with  a small  framed   triangulation.
		
		Conversely, if an orientation is fixed, assign to each of the triangles the corresponding small  framed symmetric necklace.
		The necklaces are consistent, and therefore, taken together, they define a semisimplicial triangulation of some bundle.
		
		(2) This follows from Lemma  \ref{LemmaComput}  and LCF.
	\end{proof}

	\medskip
	
	Now we can prove Theorem \ref{ThmMain3}.

Enumerate the vertices of $B$ in an arbitrary way by $0,...,N$. Thus $B$ can be viewed as a subcomplex of the simplex $\Delta^N$.
Associate the necklace $$\mathcal{N}=(0,0,1,1,2,2,3,3,...,N,N,\mathbf{0},\mathbf{1},\mathbf{2},\mathbf{3},...,\mathbf{N})$$  to the simplex $\Delta^N$. Associate to each face of $\Delta^N$ the
necklaces obtained from $\mathcal{N}$ by eliminating all the beads that do not correspond to the face. For an example, the necklace $(0,0,2,2,3,3,\mathbf{0},\mathbf{2},\mathbf{3})$ is associated with the triangle $023$. The triangulated fiber bundle  which comes from this collection of necklaces is trivial since the base is contractible. Therefore the restriction of the bundle to $B$ is also trivial.
	
	\medskip
	\section{Proof of Theorem  \ref{ThmMain1}}\label{SecMain1}

	\subsection*{A small  semisimplicial triangulation with a prescribed $\mathcal{E}$}
	
	Let  $B$ be a triangulated two-dimensional closed oriented surface
	with $f(B)\geq 4|\mathcal{E}|$.    Let us  construct a  small  semisimplicially triangulated circle bundle with Euler
	number $\mathcal{E}\in \mathbb{Z}$.
	
	Fix one of the simplices $\sigma^2_0\in B$.
	Assuming that each $2$-simplex of $B$ is oriented consistently with the global orientation of $B$, fix any $2$-cochain  $b$  which assigns $\pm 1/4$ to each of the $2$-simplices
	such that the sum of the values equals  $\mathcal{E}$.

	According to Lemma \ref{lemmaSmall}, we need to find $a\in Or(B)$  such that $b=\mathcal{F}(da)$.
	We arrive at a collection of  $f(B)-1$  linear equations over the field $\mathbb{Z}_2$:  we have one equation for each of the $2$-simplices except for $\sigma^2_0$, the unknowns are the orientations of the edges.

In simple words, assume that $b(\sigma^2_i)=1/4$. The corresponding equation rephrases as "the number of edges  of $\sigma^2_i$  whose orientations agree with
the orientation of  $\sigma^2_i$  is odd."

	Lefthand sides of the equations are linearly independent, so there exists a solution $a$  which gives the prescribed number $\pm1/4$ on each of the faces except, may be, for the face
	$\sigma^2_0$. 
By Lemma \ref{lemmaSmall},  $a$ yields some triangulated circle bundle. Applying the LCF, we get a cochain which (1) represents the Euler class of the bundle, and (2)
 is identical with $b$ except, may be, the unique simplex $\sigma_0$.    Since we deal with integer numbers, we have
 $b(\sigma^2_i)=\mathcal{F}(da)(\sigma^2_i)$ for all the simplices.

	\subsection*{Upgrade    to a classical simplicial triangulation}
	
	Each necklace of the semisimplicial triangulation satisfies the mixed colores property from Lemma \ref{LemmaMixed}, so it remains to
	increase the number of the beads  of each color by one.
	
	There is an operation of \textit{doubling of a bead} in the necklaces. One chooses a bead over a vertex of the base, and replaces the bead and all its images under morphisms
by a doubled bead. The doubling operation corresponds to a refinement of triangulation of the bundle, see Fig. \ref{E}.  This operation  is inverse to \textit{spindle contraction}  from \cite{Mnev1}.   This operation  doesn't change the Euler class.
	So one doubles one bead over each of the vertices of the base, and also consistently beads are added to all the necklaces. Eventually each of the necklaces over a $2$-simplex has $9$ beads, three of each color, which is the smallest possible for a classical simplicial triangulation.

\begin{figure}[h]
\includegraphics[width=14cm]{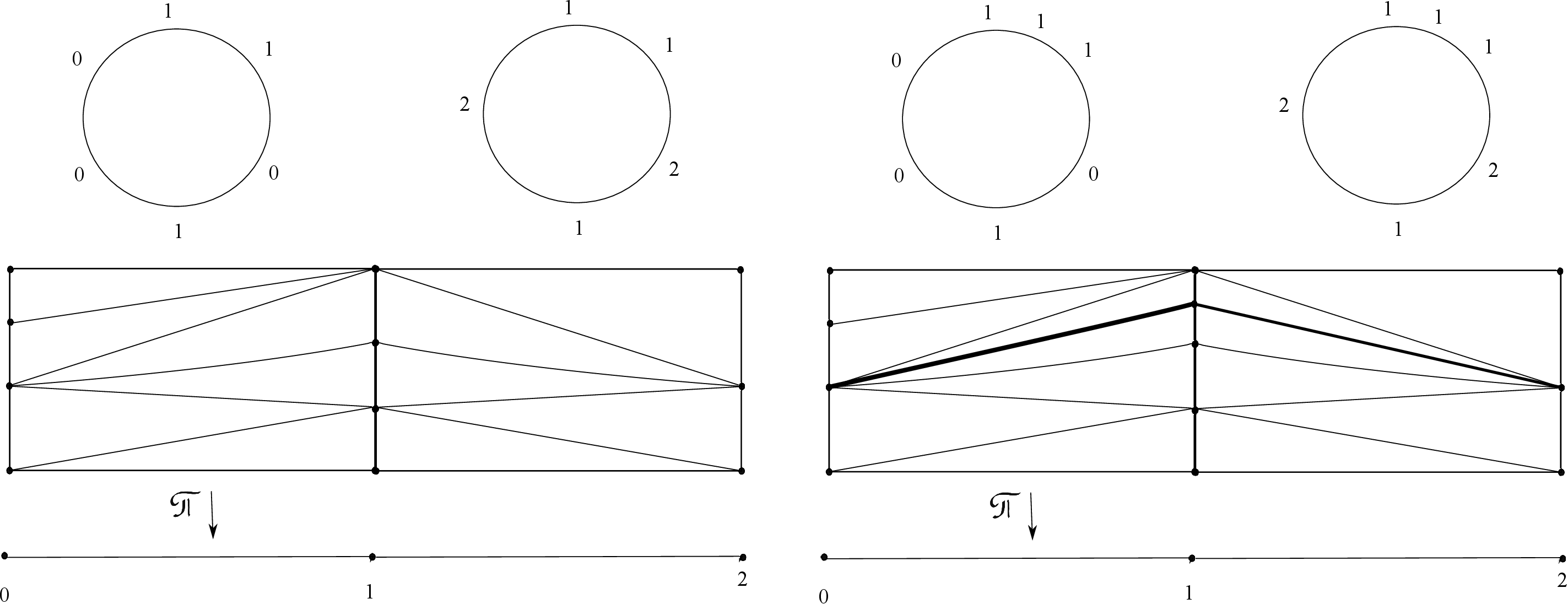}
\caption{Doubling operation. Left: initial triangulation and initial necklaces. Right: after doubling of a bead. }
\label{E}
\end{figure}

	\section{Proof of Lemma  \ref{LemWin}  and Corollary \ref{CorMin}}
	We have at most $f=12$ triangles in the triangulation  of $B$. Since  the number of vertices $v$ satisfies $v=(f+4)/2$,  we have at most $8$ vertices. Since for
the number of edges $e$ we have
	$e=3v-6$, we   always have a vertex with a degree at least $6-12/v$.

	Let $v=4$, that is, we have a tetrahedron. Let's pick two of its vertices, color them red,  and then pick the remaining two vertices and win.

	If $v=5$ or $v=6$,  there is a vertex $V$ with degree at least $4$. Color the vertex red,  and after that, pick two disjoint edges connecting the  neighbours of the vertex.
In the case $v=6$, there remains an uncolored vertex; In the case $v=6$, there remains an uncolored vertex; we may color it in the beginning..

	If $v=7$  or  $v=8$,  there is a vertex $V$ with degree at least $5$. Color $V$ red. If $V$ has degree at least $6$, there are three disjoint edges connecting  neighbours of $V$.
One colors them one by one and eventually wins. Otherwise $V$ has a neighbour  $V'$ which is connected with a  vertex $V''$   sharing no edge with $V$.  Let  us color red  two disjoint edges connecting the neighbors of $V$, but not containing $V'$. After that one colors the edge $V'V''$. (In the case $v=8$ there remains one uncolored vertex; we may color it in the beginning.)

	\medskip
	Now  Corollary \ref{CorMin}  is straightforward, since the number of $2$-simplices in the base cannot be smaller than $\mathcal{E}/4$, and the preimage of each of them
	contains at least nine $3$-simplices  by Lemma \ref{LemmaMixed}.

	\section{Proof of Theorem  \ref{ThmMain2}}
	
	\begin{proof} We need to prove the following: if the base $B$ has a winning strategy for the coloring game,  and $$f(B)< 4|\mathcal{E}|,$$
		then a circle bundle with  Euler number $\mathcal{E}\in \mathbb{Z}$  can not be (classicaly) simplicially triangulated over
		$B$.
		
		\medskip
		
		Assume the contrary, that is, for a triangulated circle bundle  $f(B)< 4|\mathcal{E}|$. Without loss of generity we may also  assume that $\mathcal{E}>0$.
		
		We make use of an operation of \textit{doubling} the bead in the necklace  (see Section \ref{SecMain1})  which  doesn't change the isomorphism class of the circle bundle.
		
		For a vertex in $B$ we  choose one bead and double it many (tending to $\infty$) times, so we get a \textit{huge bead}. We plan to create one huge bead above each of the vertices of the base. Application of the LCF for a face $\sigma^2$ with three   huge beads (one huge bead over each of the vertices of $\sigma^2$)   gives  the value $\varepsilon(\sigma^2)$ which is arbitrarily close to $\pm 1/2$. Let us say that the value is \textit{almost} $\pm 1/2$. We shall show that it is possible to create huge beads  such    that  the value of the  Euler number  of the bundle will be at most $f(B)/4$.
		
		We follow our coloring game:
		
		Firstly, we  pick one or two vertices, and select some huge beads for them. Next, we  take the adjacent green face. It has one more vertex which becomes colored on this step. Chose a bead over the new vertex of the green face such that the value of the LCF   for the new green face  gives a value  $\varepsilon(\sigma^2)$  which  {almost} equals $-1/2$. 
 Proceed in the same way
		adding a vertex and a green face at a time until all the vertices of the base are colored. According to construction, the value of the cochain on each of the green faces is    $-1/2$. On the other faces the value cannot exceed $1/2$. The number of green faces is at least $k=\lfloor (v-1)/2 \rfloor$. Therefore the Euler number is at most $(f-k)/2-k/2=f/2-k$. The Euler formula $v-e+f=2$ together with $2e=3f$ imply $v=f/2+2$, so  Euler class is at most $f/2-\lfloor ((f/2+2-1)/2) \rfloor=\lfloor f/4 \rfloor$.
	\end{proof}

	\section{Proof of Theorem  \ref{Thm}}
	
	The main ingredient of the proof is the fact that the (isomorphism class of) a circle bundle is uniquely determined by its integer Euler class.
So it suffices to construct a triangulated circle bundle with a prescribed Euler class. This will be done in three steps:
\begin{enumerate}
  \item We first construct a triangulated circle bundle over the $2$-skeleton of the base. Here we again use small symmetric necklaces  combined with doubling of some beads.
  \item Next, we show that this collection of necklaces extends to a collection of necklaces over the $3$-skeleton.  This is possible by cocycle property  of $e$.
  \item Finally, we show that this collection of necklaces (unconditionally) extends to a collection of necklaces over the entire base $B$.
\end{enumerate}

Cochains $a\in C^1(B, \mathbb{Z})$ admitting the values $\pm 1$ only are in a natural bijection with orientations on the edges of $B$.
 The value of $\mathcal{F}(da)(\sigma^2)$ at a simplex $\sigma^2$ equals $1/4$ iff the number of edges of the simplex  $\sigma^2$ whose orientation
agrees with the orientation of  $\sigma^2$ is even. Otherwise,  $\mathcal{F}(da)(\sigma^2)=-1/4$

	Assume the integer Euler class  of a circle bundle
	is represented by a simplicial $2$-cocycle $e=\mathcal{G}_a$  for some $a\in C^1(B, \mathbb{Z})$ admitting the values $\pm 1$ only. Then the  cocycle $\mathcal{F}(da)=\mathcal{ G}_a+\frac{3\cdot da}{4}$ also represents the same cohomology class   (since the difference of the two cocycles  is a coboundary).

	Create a small symmetric circle bundle over the $2$-skeleton of the base $B$ similarly as it was done in  Section \ref{SecMain1}.
		This gives a triangulated bundle over the $2$-skeleton of $B$ such that the LCF
	gives exactly the cochain $\mathcal{ F}(da)$.
	\begin{lemma}
		Since $\mathcal{ F}(da)$  is a cocycle, the  small symmetric triangulated circle bundle  (over the $2$-skeleton of $B$) extends uniquely to a small symmetric triangulated circle bundle over the $3$-skeleton.
	\end{lemma}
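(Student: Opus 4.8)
The plan is to verify that a consistent collection of small symmetric necklaces over the $2$-skeleton admits a unique consistent extension to every $3$-simplex of $B$, which amounts to a purely local combinatorial statement: given a tetrahedron $\sigma^3=0123$ with prescribed small symmetric necklaces $\mathcal{O}(\tau)$ on each of its four triangular faces $\tau$ and each of its six edges (compatible under the edge-face morphisms), there is exactly one small symmetric necklace $\mathcal{O}(\sigma^3)$ in four colors, containing two beads of each color, whose face morphisms recover the given triangle necklaces. First I would recall that a small symmetric necklace is determined by a cyclic word of length $2n$ (here $n=4$) with antipodal beads forced to have equal color; equivalently, by a cyclic arrangement of the $n$ colors on a length-$n$ necklace, \emph{doubled}. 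So specifying $\mathcal{O}(\sigma^3)$ is the same as specifying a cyclic ordering of the four colors $0,1,2,3$ up to rotation and the central symmetry already built in; there are $3!=6$ such, but the central symmetry identifies a necklace with its reverse only after doubling — I would check carefully that in fact a small symmetric $4$-color necklace corresponds to a cyclic word $abcd$ on the four colors with $\mathcal{O}=abcdabcd$, and the data of such a necklace is exactly a cyclic order on $\{0,1,2,3\}$, of which there are $6$.

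Next I would show that the restriction maps are faithful enough: deleting one color from $abcdabcd$ yields the $3$-color small symmetric necklace on the remaining three colors in the induced cyclic order. Thus each of the four face necklaces records the cyclic order of three of the four colors, and these four ternary cyclic orders are exactly the $\binom{4}{3}=4$ "shadows" of a cyclic order on four elements. The key combinatorial fact is that a cyclic order on a $4$-element set is uniquely determined by \emph{any one} of its $3$-element restrictions together with one more bit — and in fact the four $3$-element restrictions, when they are mutually consistent (which the cocycle/coboundary bookkeeping guarantees, since the $\mathcal{F}(da)$ values on the four faces of $\sigma^3$ were produced from a single edge-orientation $a$), pin down a unique cyclic $4$-order. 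Concretely: the orientation data $a$ restricted to the $6$ edges of $\sigma^3$ assigns a direction to each edge of $K_4$; on each face this records "which bold bead comes first" as in Figure \ref{FigFr}, and I would check that such a system of directed edges on $K_4$ that came from a genuine edge-orientation determines a cyclic order on the $4$ vertices precisely when no directed $3$-cycle is "incompatible" — and since all of them arose by restricting the \emph{same} global $a$, compatibility is automatic. The morphisms $\phi_{ij}$ between necklaces over $\sigma^3$ and its faces are then the obvious inclusions, and consistency ($\phi_{ik}=\phi_{jk}\circ\phi_{ij}$) is immediate from the description by cyclic orders.

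For existence and uniqueness together I would argue as follows. Existence: take the cyclic $4$-order on $\{0,1,2,3\}$ determined by the directed edges of $a|_{\sigma^3}$ (this uses that $a$ is globally defined, so there is \emph{a} cyclic order realizing all six directions simultaneously — equivalently, the directed graph has no obstruction, which is where one may need the hypothesis that $H^2$ has no $2$-torsion, or rather this is automatic for a single simplex), double it, and check via LCF that the four faces receive exactly the prescribed values $\mathcal{F}(da)$; this is the content of Lemma \ref{LemmaComput} applied one dimension up, or a direct count. Uniqueness: any small symmetric $4$-color necklace over $\sigma^3$ whose face-restrictions are the given ones must induce the given cyclic order on each triple, and a cyclic order on a finite set is determined by its restrictions to all triples, so the necklace (hence the extension) is unique. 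The main obstacle I anticipate is the bookkeeping at the level of \emph{morphisms rather than just necklaces}: I must ensure that the bold-bead data (the framing) extends consistently, i.e. that there is a unique choice of which $4$ beads (one per color) to make bold in $\mathcal{O}(\sigma^3)$ so that deleting a color recovers the framed face necklace — and here the subtlety is that a framed small necklace has more data than an unframed one, so I would need to check the count in Lemma \ref{LemmaComput} (eight framed $3$-color necklaces, four of each sign) lifts to the right count of framed $4$-color necklaces and that the lift is forced by the four faces. I expect this to go through because the framing on the faces is itself governed by the single global cochain $a$, so the framed extension is as rigid as the unframed one; but this compatibility of framings across the four faces of each tetrahedron is the step that requires the most care.
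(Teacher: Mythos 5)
Your reduction to a single tetrahedron and the identification of small symmetric necklaces with doubled cyclic orders is the right frame, and it parallels the paper's local case analysis. But there is a genuine gap at the decisive step: you claim the four ternary cyclic orders carried by the face necklaces are automatically compatible ``since all of them arose by restricting the same global $a$''. That is false, and it is precisely the point where the hypothesis of the lemma --- that $\mathcal{F}(da)$ is a cocycle --- must be used. An arbitrary orientation $a$ of the six edges of $\sigma^3$ is a perfectly genuine edge orientation, yet it can produce the pattern in which all four faces receive the same value $+1/4$ relative to the induced boundary orientations; by Lemma \ref{LemmaComput} the underlying (unframed) face necklaces are then all of the same handedness, these four ternary cyclic orders are the shadows of no cyclic order on $\{0,1,2,3\}$, and indeed the resulting bundle over $\partial\sigma^3$ is the Hopf bundle, which extends to no bundle over the solid simplex. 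What excludes this configuration is exactly the local cocycle identity $d(\mathcal{F}(da))(\sigma^3)=0$: since each edge of $\sigma^3$ lies in two faces, the admissible sign patterns are all-four-equal or two-plus-two, and the cocycle condition eliminates the first, leaving the two-plus-two pattern, which is the one that assembles (the paper verifies this by exhibiting the necklace $\mathbf{1234}1234$). Your parenthetical appeal to the absence of $2$-torsion in $H^2(B,\mathbb{Z})$ is a red herring: that hypothesis plays no role in this lemma.

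A secondary, smaller issue: you flag the compatibility of framings (bold beads) as the step requiring most care but leave it at ``I expect this to go through''. It does: each color lies in three of the four faces, restriction only deletes beads, so the bold bead of each color in the $4$-colored necklace is forced by the faces, and in the two-plus-two scenario the forced choice is consistent (up to symmetry it is $\mathbf{1234}1234$); but as written this is asserted rather than proved. With the cocycle hypothesis invoked correctly and the framing check carried out --- or replaced by the paper's explicit two-case inspection --- your argument becomes a correct, somewhat more systematic version of the paper's proof, including the uniqueness claim, which your observation that a cyclic order is determined by its ternary restrictions does handle.
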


\begin{proof}

	The lemma is proven by a small case analysis. Take a  simplex $\sigma^3$ in the base. It is easy to see that there exist (up to symmetries) only two scenarios for the cochain $a$. One scenario  gives a $2$-cochain  attaining the values $1/4$ on each of the faces.
	This cochain is not a cocycle, and induces the Hopf bundle over $\partial \sigma^3$. The other scenario gives a cochain which takes the values $1/4$ on two faces of $\sigma^3$, and the values $-1/4$ on the other two faces.  This amounts to necklaces $\mathbf{123}123$,  $\mathbf{124}124$,  $\mathbf{234}234$, and  $\mathbf{134}134$, which can be incorporated to a necklace  $\mathbf{1234}1234$.
  
\end{proof}

	\begin{lemma}
		Any small symmetric triangulated circle bundle over the $3$-skeleton of $B$ extends uniquely to a small symmetric triangulation over $B$.
	\end{lemma}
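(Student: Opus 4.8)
The plan is to show that a small symmetric triangulated circle bundle defined over the $3$-skeleton of $B$ automatically extends to all of $B$ without any further conditions, the point being that a small symmetric necklace over a simplex $\sigma^k$ is determined by, and determines, the compatible system of necklaces over the $2$-faces (or, for the combinatorics, already over the edges) of $\sigma^k$. So I would first make precise what a small symmetric necklace over $\sigma^k$ is: it is a centrally symmetric cyclic word of length $2(k+1)$ containing each of the $k+1$ colors exactly twice, and such a word is completely recorded by the cyclic order of its "first halves", i.e.\ by a total cyclic ordering of the $k+1$ colors (equivalently a framing after one color is declared bold). Thus the small symmetric necklaces over $\sigma^k$ are in bijection with cyclic orders on $Vert(\sigma^k)$, and the morphism to a face $\tau$ is just the induced cyclic order. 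Consistency of a collection of small necklaces over the $2$-skeleton is then exactly the statement that the pairwise cyclic orders on the edges are induced from cyclic orders on the triangles; over the $3$-skeleton, consistency says the triangle data come from cyclic orders on the tetrahedra.

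Next I would observe the key combinatorial fact: a cyclic order on a finite set $S$ is determined by its restriction to all $3$-element subsets (a "cyclic order" is by definition a ternary relation on $S$ satisfying cyclicity, asymmetry, transitivity, and totality), and conversely a family of cyclic orders on the $3$-subsets which is pairwise compatible on $2$-subsets — more precisely, a ternary relation on $S$ — is realizable by a genuine cyclic order on $S$ as soon as it is realizable on every $3$-subset and every $4$-subset. This is the standard fact that the axioms for a cyclic order are "of arity at most $4$": transitivity involves four elements. Hence, given the small symmetric bundle over the $3$-skeleton, for each simplex $\sigma^k$ with $k\ge 4$ the collection of cyclic orders already prescribed on all triangles contained in $\sigma^k$ is consistent on edges and on tetrahedra, so by the arity-$4$ property it is the restriction of a unique cyclic order on $Vert(\sigma^k)$. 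Assigning to $\sigma^k$ the small symmetric necklace corresponding to that cyclic order, and doing this for all simplices of dimension $\ge 4$, produces a consistent extension; uniqueness is immediate since a cyclic order is determined by its $3$-subsets, which were already fixed on the $2$-skeleton. I would also note that doubling of beads, used in the earlier steps to pass from small necklaces to admissible necklaces, extends along the inclusion of skeleta automatically since doubling a bead over a vertex of the base is performed simultaneously on all necklaces by definition of the doubling operation, so it does not interfere with the extension argument.

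The main obstacle, and the only place requiring care, is the realizability statement for cyclic orders: one must check that a ternary relation on a set which restricts to a genuine cyclic order on every subset of size $\le 4$ is itself a cyclic order, i.e.\ that the transitivity axiom (which, as written, quantifies over four points) together with totality, asymmetry and the $3$-point cyclicity force the relation to be a cyclic order on the whole set — there is no hidden higher-arity constraint. I expect to dispatch this by the usual reduction: fix one element $s_0\in S$ and use the ternary relation to define a linear order on $S\setminus\{s_0\}$ by "$x<y$ iff $(s_0,x,y)$ holds"; the $4$-point instances of transitivity are exactly what is needed to verify this is a transitive (hence linear, by totality and asymmetry from the $3$-point axioms) order, and then the cyclic order reconstructed from this linear order agrees with the given ternary relation on all triples, again by the $\le 4$-point compatibility. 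The remaining bookkeeping — that central symmetry is preserved, that the morphisms commute, and that no condition beyond the $3$-skeleton data is imposed — is routine. This proves the lemma, and combining it with the previous two steps completes the proof of Theorem \ref{Thm}.
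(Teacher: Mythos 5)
Your proposal is correct and takes essentially the same route as the paper: after identifying small symmetric necklaces over a simplex with cyclic orders on its vertex colors, your realizability argument (fix a basepoint, linearize the remaining elements, use the $4$-element/tetrahedral data for transitivity, then check agreement on triples avoiding the basepoint) is precisely the paper's argument of fixing a bead of color $0$, assembling a linear order on the other beads via the necklaces over faces containing $0$, and verifying consistency with the necklaces not containing $0$. The only cosmetic difference is that the paper works directly with beads (in a form that also covers non-small necklaces, following Mn\"ev's Prop.~7), while you package the same content as the statement that cyclic orders are axiomatized by conditions of arity at most $4$.
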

\begin{proof}

In a slight disguise,  a version of the lemma is proven in \cite{Mnev1},  Prop. 7.
However, we give here a proof  for completeness.

	 Let us rephrase the lemma:  for $k \geq 5$, any collection of necklaces together with morphisms associated to the faces  of a $k-1$-simplex originates from some necklace with $k$ types of beads.

 Assume  that $k=5$, and let the vertices of the simplex be $0,1,...,4$.
	Fix  one bead   colored by $0$  in the necklace over the vertex $0$, and mark all its images in the other necklaces associated to faces of the simplex containing the vertex $0$. Choice of this bead defines a linear order on the beads in these necklaces (we treat this bead as the "first" one).  Indeed,  every two beads  $a$, $b$  appear in a necklace with colors $a,b$ and $0$, and thus define either $a<b$ or $b<a$.   If  $a < b$ and $b < c$, then we can look at the necklace with the colors of beads $a,b,c$ and $0$  and conclude that $a<c$.
So  we can assemble all the necklaces containing $0$ in a unique  a $k$-colored necklace. Each triple of beads has one and the same orientation which does not depend on the ambient necklace. Using this, it is easy to see that  the $k$-colored necklace     agrees with the  necklaces  from the collection that do not contain $0$.
  
\end{proof}



\begin{thebibliography}{99}
		
		\bibitem{Gan} G. Gangopadhyay,\textit{ Counting triangles formula for the first Chern class of a
			circle bundle}, ArXiv e-prints, December 2017, 1712.03024.
		

\bibitem{Hat} A. Hatcher, \textit{Algebraic topology}, Cambridge University Press, 2005.

		\bibitem{Igusa}  K. Igusa, \textit{Combinatorial Miller-Morita-Mumford classes and Witten
			cycles}, Algebr. Geom. Topol., 4, 2004, 473--520.
		
		
		\bibitem{MS} K. V. Madahar and K. S. Sarkaria, \textit{ A minimal triangulation of the
			Hopf map and its application}, Geom. Dedicata, 82(1-3), 2000, 105--114.
		
		
		
		\bibitem{Mnev1}
		N. Mn\"ev, \textit{Minimal triangulations of circle bundles, circular permutations, and the binary Chern cocycle}, Zap. Nauch. Semin. POMI, 481, 2019, 87--107.
		\bibitem{Mnev2} N. Mn\"ev, \textit{
			Which circle bundles can be triangulated over $\partial \Delta^3 $?},  Zap. Nauch. Semin. POMI, 468, 2018, 75--81.
		
		\bibitem{Mnev3} N. Mn\"ev, G. Sharygin, \textit{
			On local combinatorial formulas for Chern classes of a triangulated circle bundle},  Zap. Nauch. Semin. POMI, 448, 2016, 201--235.

\bibitem{Mor} S. Morita, \textit{Geometry of differential forms}, AMS, 2001.
	\end{thebibliography}
\end{document}